\theoremstyle{definition}
\newtheorem{definition}{Definition}
\newtheorem{theorem}[definition]{Theorem}
\newtheorem{proposition}[definition]{Proposition}
\newtheorem{lemma}[definition]{Lemma}
\newtheorem{corollary}[definition]{Corollary}
\theoremstyle{remark}
\newtheorem{remark}[definition]{Remark}
\newcounter{enumctr}
\newcommand{\N}{\mathbb{N}}
\newcommand{\R}{\mathbb{R}}
\newcommand{\E}{\mathbb{E}}
\newcommand{\mP}{\mathbb{P}}
\newcommand{\T}{\mathbb{T}}
\newcommand{\cF}{\mathcal{F}}
\newcommand{\cB}{\mathcal{B}}
\newcommand{\eps}{\varepsilon}
\renewcommand{\phi}{\varphi}
\newcommand{\rmd}{\mathrm{d}}
\title[Mean-square dichotomy spectrum for  random dynamical systems]
{The mean-square dichotomy spectrum\\ and a bifurcation to a mean-square attractor}%
\author[Thai Son Doan, Martin Rasmussen, and Peter E. Kloeden]{}
\keywords{Dichotomy spectrum, mean-square random dynamical system, mean-square random attractor, bifurcation,}
\subjclass{Primary: 37H15, 37H20, 60H10; Secondary: 37H10, 60H30  }
\email{t.doan@imperial.ac.uk}
\email{kloeden@math.uni-frankfurt.de}
\email{m.rasmussen@imperial.ac.uk}
\thanks{The first author was supported by a Marie Curie IEF Fellowship (2013--2015). The second author was supported by an EPSRC Career Acceleration Fellowship (2010--2015). The third author was supported the DFG grant KL~1203/7-1 and a Nelder Visiting Fellowship.}
\begin{document}
\maketitle

\centerline{\scshape Thai Son Doan\footnote{Also at Institute of Mathematics, Vietnam Academy of Science and Technology, 18 Hoang Quoc Viet, Ha Noi, Vietnam}, Martin Rasmussen}
\medskip
{\footnotesize
 \centerline{Department of Mathematics}
 \centerline{Imperial College London}
 \centerline{180 Queen's Gate}
 \centerline{London SW7 2AZ, United Kingdom}}

\medskip

 \centerline{\scshape Peter E. Kloeden}
{\footnotesize
 \centerline{Institut f\"ur  Mathematik, Goethe Universit\"at}
  \centerline{D-60054 Frankfurt am Main, Germany}
}

\bigskip


\begin{abstract}
The dichotomy spectrum is introduced for linear mean-square random dynamical systems, and it is shown that for finite-dimensional mean-field stochastic differential equations, the dichotomy spectrum consists of finitely many compact intervals. It is then demonstrated that a change in the sign of the dichotomy spectrum is associated with a bifurcation from a trivial to a non-trivial mean-square random attractor.
\end{abstract}
\section{Introduction}
Mean-square properties are of traditional interest in the investigation of stochastic systems in engineering and physics. This is quite natural since the Ito stochastic calculus is a mean-square calculus. At first sight, it is thus somewhat surprising that the classical theory of random dynamical systems and their spectra is a pathwise theory, although this can be justified by Doss--Sussman-like  transformations between stochastic differential equations and path-wise random ordinary differential equations \cite{Arnold_98_1}. Such transformations, however, do not apply to mean-field stochastic differential equations, which include expectations of the solution in their coefficient functions \cite{Kloeden_10_1}.

Mean-square random dynamical systems based on deterministic two-parameter semi-groups from the theory of non\-auto\-nomous dynamical systems acting on a state space of random variables or random sets with the mean-square topology were introduced in \cite{Kloeden_12_2}. These act like deterministic systems with the stochasticity built into the state spaces of mean-square random variables.  A mean-square random attractor was defined as a non\-auto\-nomous pullback attractor for such systems from the theory of non\-auto\-nomous dynamical systems \cite{Kloeden_11_2}. The main difficulty in applying the theory is the lack of useful characterisations of compact sets of such spaces of mean-square random variables.

In this paper, a theory of mean-square exponential dichotomies is presented for linear mean-field stochastic differential equations. (It also applies to classical  linear stochastic differential equations). Although the corresponding  mean-square random dynamical systems are essentially  infinite-dimensional their dichotomy spectrum is given by the union of finitely many intervals. This is applied to analyse a nonlinear mean-field stochastic differential equation, for which it is shown that the trivial solution undergoes a mean-square bifurcation leading to a nontrivial mean-square attractor.

The paper is structured as follows. Section~\ref{sec_1} contains the definition of a mean-square random dynamical system, and the notions of mean-square exponential dichotomy and mean-square dichotomy spectrum are introduced. Section~\ref{sec_mssde} explains under which conditions, a mean-field stochastic differential equation generates a mean-square random dynamical system. In Section~\ref{sec_dsmfsde}, the spectral theorem is established, which says that the mean-square spectrum of a linear mean-field stochastic differential equation consists of finitely many compact intervals. Finally, in the last section, it is shown that for a one-dimensional mean-field SDE of pitchfork-type, a stability change in the mean-square spectrum is associated with a bifurcation from a trivial to a non-trivial mean-square random attractor.

\section{Mean-square random dynamical systems}\label{sec_1}

Consider the time set $\R$, and define $\R_{\geq}^2:=\big\{(t,s)\in \R^2: t\geq s\big\}$. Let $(\Omega,\mathcal F, \{\mathcal F_t\}_{t\in\R},\mP)$ be a complete filtered probability space satisfying the usual hypothesis, i.e., ~$\{\mathcal F_t\}_{t\in\R}$ is an increasing and right-continuous family of sub-$\sigma$-algebras of $\mathcal F$, which contain all $\mP$-null sets. Essentially, $\mathcal F_t$ represents the information about the randomness at time $t\in\R$. Finally, define
\[
\mathfrak X:=L^2(\Omega,\mathcal F;\R^d)\quad\mbox{and}\quad
\mathfrak X_t:=L^2(\Omega,\mathcal F_t;\R^d)\qquad\mbox{for } t\in\R
\]
with the norm $\|X\|_{\rm ms}:= \sqrt{\E |X|^2}$, where  $|\cdot|$ is the Euclidean norm on $\R^d$.
\begin{definition}\label{MRDS}
  A \emph{mean-square random dynamical system} (MS-RDS for short) $\phi$ on the underlying phase space $\R^d$ with the filtered probability space $(\Omega,\cF,\{\cF_t\}_{t\in\R},\mP)$ is a family of mappings
  \[
  \phi(t,s,\cdot):\mathfrak X_s\rightarrow \mathfrak X_t,\qquad\mbox{for } (t,s)\in \R^2_{\geq},
  \]
  which satisfies:
  \begin{itemize}
  \item [(1)] \emph{Initial value condition}. $\phi(s,s,X_s)=X_s$ for all $X_s\in\mathfrak X_s$ and $s\in\R$.
  \item [(2)] \emph{Two-parameter semigroup property}. For all $X\in \mathfrak X_{w}$ and all $(t,s), (s,w)\in\R^2_{\geq}$
  \[
  \phi(t,w,X)=\phi(t,s,\phi(s,w,X)).
  \]
  \item [(3)] \emph{Continuity}. $\phi$ is continuous.
  \end{itemize}
\end{definition}

Mean-square random dynamical systems are essentially deterministic with the stochasticity built into or hidden in the time-dependent state spaces.

A MS-RDS $\phi$ is called \emph{linear} if for each $(t,s)\in \R_{\geq}^2$, the map $\phi_{t,s}(\cdot) := \phi(t,s,\cdot)$ is a bounded linear operator. It will be denoted by $\Phi_{t,s}$, and $\Phi_{t,s}(X)$ will conventionally be written $\Phi_{t,s}X$. A spectral theory for linear mean-square random dynamical systems can be established based on exponential dichotomies.

\begin{definition}[Mean-square exponential dichotomy]\label{Exponential Dichotomy}
  Let $\gamma\in\R$. A linear mean-square random dynamical system $\Phi_{t,s}: \mathfrak X_s \to \mathfrak X_t$ is said to admit an \emph{exponential dichotomy} with \emph{growth rate} $\gamma$ if there exist positive constants $K,\alpha$ and a time-dependent decomposition
  \[
  \mathfrak X_t=U_\gamma(t)\oplus S_\gamma(t)\qquad \mbox{for } t\in\R
  \]
  such that
  \begin{align*}
  \|\Phi_{t,s}X_s\|_{\rm ms}&\leq K e^{(\gamma-\alpha)(t-s)}\|X_s\|_{\rm ms}\qquad\hbox{for } X_s\in S_{\gamma}(s) \mbox{ and } t\geq s\,,\\
  \|\Phi_{t,s}X_s\|_{\rm ms}&\geq \frac{1}{K} e^{(\gamma+\alpha)(t-s)}\|X_s\|_{\rm ms}\qquad\hbox{for } X_s\in U_{\gamma}(s)\mbox{ and } t\geq s\,.
  \end{align*}
\end{definition}

A special case of exponential dichotomy, when the growth rate is equal to zero and the space of initial condition consists of the deterministic vectors in $\R^d$, is also investigated in \cite{Ateiwi_02_1,Stoica_10_1}, where a Perron-type condition for existence of this exponential dichotomy is established.

\begin{definition}[Mean-square dichotomy spectrum]\label{DichotomSpectrum}
  The \emph{mean-square dichotomy spectrum} for a linear MS-RDS $\Phi$ is defined as
  \begin{displaymath}
  \Sigma
  :=
  \Big\{\gamma\in\R: \Phi \hbox{ has no exponential dichotomy with growth rate}\; \gamma \Big\}.
  \end{displaymath}
  The set $\rho:=\R\setminus \Sigma$ is called the \emph{resolvent set} of $\Phi$.
\end{definition}

The dichotomy spectrum was first introduced in \cite{Sacker_78_2} for nonautonomous differential equations. Dichotomy spectra for random dynamical systems have been discussed recently in \cite{Callaway_Unpub_1,Cong_02_1,Wang_Unpub_2}.

\section{Mean-field stochastic differential equations}\label{sec_mssde}
Mean-field stochastic differential equations of the form
\begin{equation}\label{Eq1}
  \rmd X_t=f(t,X_t,\E X_t) \,\rmd t+g(t,X_t,\E X_t)\,\rmd W_t
\end{equation}
were introduced in \cite{Kloeden_10_1}. Here $\{W_t\}_{t\in\R}$ is a two-sided scalar Wiener process defined on a probability space $(\Omega,\mathcal F,\mP)$, and $F:=(f,g):\R\times \R^d\times \R^d \rightarrow \R^d\times \R^d$.

Let $\hbox{Lip}(\R^d)$ denote the set of Lipschitz continuous functions $f:\R^d\rightarrow \R^d$, and for each $f\in\hbox{Lip}(\R^d)$, set
\[
\|f(\cdot)\|_{\rm lg}:=\sup_{x\in\R^d}\frac{|f(x)|}{1+|x|}\,.
\]
Suppose that
\begin{itemize}
\item[(A1)]
$
\Gamma:=
\sup_{
(t,x)\in \R\times \R^d}
\Big\{
\hbox{Lip}F(t,x,\cdot)+\|F(t,x,\cdot)\|_{\rm lg}
\Big\}<\infty$.
\item[(A2)] For each $R>0$, there exist a constant $L_R$ and a modulus continuity $\omega_R$ such that
\[
\|F(t_1,x_1,\cdot)-F(t_2,x_2,\cdot)\|^2_{\rm lg}\leq L_R |x_1-x_2|^2+\omega_R(|t_1-t_2|)
\]
for all $(t_k,x_k)\in \R\times \R^d$ with $t_k+|x_k|^2\leq R$, $k\in\{1,2\}$.
\end{itemize}
Let $\{\mathcal F_t\}_{t\in\R}$ be the natural filtration generated by $\{W_t\}_{t\in\R}$, and define
\[
\mathfrak X:=L^2(\Omega,\mathcal F;\mathbb R^d),\quad \mathfrak X_t:=L^2(\Omega,\mathcal F_t;\mathbb R^d)\quad\hbox{ for } t\in\R.
\]
Given any initial condition $X_s\in\mathfrak X_s$, $s\in\R$, a solution of \eqref{Eq1} is a stochastic process $\{X_t\}_{t\geq s}$ with $X_t\in \mathfrak X_t$ for $t\geq s$, satisfying the stochastic integral equation
\[
X_t=X_s+\int_s^t f(u,X_u,\E X_u)\,\rmd u+\int_s^t g(u,X_u,\E X_u)\,\rmd W_u\,.
\]
It was shown in \cite{Kloeden_10_1} that the SDE~\eqref{Eq1} has a unique solution and generates a MS-RDS $\{\phi_{t,s}\}_{t\geq s}$ on the underlying phase space $\R^d$ with a probability set-up $(\Omega,\mathcal F,\{\mathcal F_t\}_{t\in\R},\mP)$, defined by $\phi_{t,s}:\mathfrak X_s\rightarrow \mathfrak X_t$ with
\[
\phi_{t,s}(X_s)=X_t\qquad\hbox{ for  } X_s\in \mathfrak X_s\,.
\]

\section{Mean-square dichotomy spectrum for linear mean-field stochastic differential equations}\label{sec_dsmfsde}
Consider a linear mean-field stochastic differential equation
\begin{equation}\label{Eq2}
\rmd X_t=\big(A(t)X_t+B(t)\E X_t\big)\,\rmd t+\big(C(t)X_t+D(t)\E X_t\big)\,\rmd W_t\,,
\end{equation}
where $A,B,C,D:\R\rightarrow \R^{d\times d}$ are continuous bounded functions, which generates a linear mean-square random dynamical system $\Phi_{t,s}$.

\begin{proposition}[Equations for the first and second moments]\label{Prp3}
  Let $X_s\in \mathfrak X_s$, and define $X_t:=\Phi_{t,s}  X_s$ for $t\geq s$. Then $\frac{\rmd}{\rmd t}\E X_t= \big(A(t)+B(t)\big)\E(X_t)$ and for all $i,j\in \{1,\dots,d\}$,
  \begin{align*}
    \frac{\rmd}{\rmd t}\E X^i_t X^j_t&
    =
    \sum_{k=1}^d\left( a_{ik}(t)\E X_t^kX_t^j+a_{jk}(t)\E X_t^kX_t^i\right)\\
    &
    +\sum_{m,n=1}^dc_{im}(t)c_{jn}(t)\E X_t^mX_t^n
    +\sum_{k=1}^d \left(b_{ik}(t)\E X_t^k\E X_t^j+b_{jk}(t)\E X_t^k\E X_t^i\right)\\
    &+
    \sum_{m,n=1}^d \left(c_{im}(t)d_{jn}(t)+c_{jn}(t)d_{im}(t)+d_{im}(t)d_{jn}(t)\right)\E X_t^m\E X_t^n.
  \end{align*}
\end{proposition}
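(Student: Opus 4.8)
The plan is to derive both identities by applying the expectation operator to the Itô integral equation for $X_t$, using that Itô integrals are mean-zero martingales (after a localisation in the second-moment case) and that $\E X_t$ is a deterministic vector, so that $\E[\E X_t]=\E X_t$.

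For the first moment, I would write \eqref{Eq2} in integral form and take expectations. Because the coefficient matrices are bounded and $u\mapsto X_u$ is mean-square continuous, the integrand $C(u)X_u+D(u)\E X_u$ lies in $L^2(\Omega\times[s,t])$, so the stochastic integral is a true martingale and vanishes in expectation. Fubini's theorem together with $\E[\E X_u]=\E X_u$ then gives
\[
\E X_t=\E X_s+\int_s^t\big(A(u)+B(u)\big)\E X_u\,\rmd u,
\]
whose integrand is continuous in $u$; differentiating yields $\frac{\rmd}{\rmd t}\E X_t=(A(t)+B(t))\E X_t$.

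For the second moments, set $m_t:=\E X_t$ and write the $i$-th component of \eqref{Eq2} as $\rmd X_t^i=\mu_t^i\,\rmd t+\sigma_t^i\,\rmd W_t$ with $\mu_t^i=\sum_k a_{ik}(t)X_t^k+\sum_k b_{ik}(t)m_t^k$ and $\sigma_t^i=\sum_m c_{im}(t)X_t^m+\sum_m d_{im}(t)m_t^m$. Itô's product rule for a single scalar Wiener process gives
\[
\rmd(X_t^iX_t^j)=\big(X_t^i\mu_t^j+X_t^j\mu_t^i+\sigma_t^i\sigma_t^j\big)\,\rmd t+\big(X_t^i\sigma_t^j+X_t^j\sigma_t^i\big)\,\rmd W_t.
\]
Taking expectations and differentiating, the stated identity then follows by expanding $\E[X_t^i\mu_t^j]$, $\E[X_t^j\mu_t^i]$ and $\E[\sigma_t^i\sigma_t^j]$, pulling the deterministic factors $m_t^k$ out of the expectations and using $\E X_t^k=m_t^k$; collecting the pure-second-moment, mixed, and pure-mean terms reproduces the three groups of sums in the statement.

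The main obstacle is the justification of the vanishing expectation of the stochastic integral in the second-moment computation: its integrand $X_t^i\sigma_t^j+X_t^j\sigma_t^i$ has quadratic variation involving fourth-order moments of $X_t$, which need not be finite for $X_s\in\mathfrak X_s$, so a priori it is only a local martingale. I would handle this by localising along the stopping times $\tau_n:=\inf\{u\geq s:|X_u|\geq n\}$, for which the stopped stochastic integral genuinely has zero mean, and then passing to the limit $n\to\infty$. This limit is legitimate because the linear equation with bounded coefficients (and bounded deterministic mean $m_u$) satisfies the maximal estimate $\E\big[\sup_{s\leq u\leq t}|X_u|^2\big]<\infty$: the integrands of the Lebesgue terms are then dominated by $C\sup_{u}|X_u|^2\in L^1$, giving convergence by dominated convergence, while $\{X_{t\wedge\tau_n}^iX_{t\wedge\tau_n}^j\}_n$ is uniformly integrable, being dominated by the same $L^1$ random variable, giving convergence of the left-hand side. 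Once this is in place, the remaining steps are routine bookkeeping.
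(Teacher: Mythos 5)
Your proposal is correct and follows essentially the same route as the paper: take expectations of the integral form of \eqref{Eq2} for the first moment, then apply It\^o's product formula and take expectations for the second moments. The only difference is that you carefully justify the vanishing expectation of the stochastic integral via localisation and a maximal estimate, a point the paper's proof passes over in silence.
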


\begin{proof}
From \eqref{Eq2},
\[
\rmd X^i_t=\sum_{k=1}^d\left(a_{ik}(t)X^k_t+b_{ik}(t)\E X^k_t\right)\,\rmd t+\sum_{k=1}^d\left(c_{ik}(t)X^k_t+d_{ik}(t)\E X^k_t\right)\,\rmd W_t
\]
holds for $t\in \R$. Taking the expectation of variables in both sides gives
\[
\E X_t=\E X_s+\int_s^t \big(A(u)+B(u)\big)\E X_u\,\rmd u\quad\hbox{for } t\geq s,
\]
which proves the first statement. Ito's product formula \cite[Example 3.4.1]{Kloeden_92_1} and the expectation then yield the second statement.
\end{proof}

\begin{corollary}\label{Corollary1}
Let $(U_{i,j}(t,s),V_{i,j}(t,s))$ be the evolution operator of the linear nonautonomous differential equation in $\R^{d(d+1)}$
\begin{align}
\dot u_{i,j}
=&
\sum_{k=1}^d \big( a_{ik}(t)+ b_{ik}(t)\big)u_{k,j}+ \big( a_{jk}(t)+ b_{jk}(t)\big) u_{k,i}\notag\\
\dot v_{i,j}
=&
\sum_{k=1}^d\left( a_{ik}(t)v_{k,j}+a_{jk}(t)v_{k,i}\right)+\sum_{m,n=1}^dc_{im}(t)c_{jn}(t)v_{m,n}\notag\\
&+\sum_{k=1}^d \left(b_{ik}(t)u_{k,j}+b_{jk}(t)u_{k,i}\right)\label{LinearNDEb}\\
&+\sum_{m,n=1}^d \left(c_{im}(t)d_{jn}(t)+c_{jn}(t)d_{im}(t)+d_{im}(t)d_{jn}(t)\right)u_{m,n},\notag
\end{align}
where $1\leq i\leq j\leq d$. Then for any $X_s\in\mathfrak X_s$,
\begin{equation}\label{Eq10}
\|\Phi_{t,s}X_s\|_{\rm ms}=\left(\sum_{i=1}^d V_{i,i}(t,s)(\pi_sX_s)\right)^{\frac{1}{2}},
\end{equation}
where the map $\pi_s=\pi_s^1\times \pi_s^2:\mathfrak X_s\rightarrow \R^{\frac{d(d+1)}{2}}\times \R^{\frac{d(d+1)}{2}} $ is defined by
\[
(\pi_s^1X_s)_{i,j}=\E X_s^i\E X_s^j\quad\text{and}\quad (\pi_s^2X_s)_{i,j}=\E X_s^iX_s^j\,.
\]
\end{corollary}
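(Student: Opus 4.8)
The plan is to show that the first and second moments of the solution $X_t=\Phi_{t,s}X_s$ evolve according to \emph{exactly} the linear system \eqref{LinearNDEb}, so that the mean-square norm of $X_t$ can simply be read off from the diagonal $v$-components of the corresponding solution. The whole statement then reduces to identifying moment dynamics with the stated ODE and matching initial data.

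First I would introduce the first-moment vector $p_i(t):=\E X_t^i$ together with the two families $u_{i,j}(t):=\E X_t^i\,\E X_t^j$ and $v_{i,j}(t):=\E X_t^iX_t^j$ for $1\le i\le j\le d$, adopting the symmetry convention $u_{k,\ell}:=u_{\ell,k}$ and $v_{k,\ell}:=v_{\ell,k}$ whenever $k>\ell$, so that every term occurring in the sums of \eqref{LinearNDEb} is meaningful (these sums run $k,m,n$ over $1,\dots,d$, while the state components are only indexed by ordered pairs). Differentiating $u_{i,j}=p_ip_j$ by the product rule and inserting the first-moment equation $\dot p_i=\sum_k(a_{ik}+b_{ik})p_k$ furnished by Proposition~\ref{Prp3} reproduces the $\dot u_{i,j}$ equation verbatim. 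For the $v$-part I would merely rewrite the second-moment equation of Proposition~\ref{Prp3}, replacing each $\E X_t^iX_t^j$ by $v_{i,j}$ and each $\E X_t^i\E X_t^j$ by $u_{i,j}$; this is term-by-term identical to the $\dot v_{i,j}$ equation in \eqref{LinearNDEb}.

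Consequently $(u_{i,j}(t),v_{i,j}(t))_{i\le j}$ is a solution of the linear nonautonomous system in $\R^{d(d+1)}$, with initial datum $(u_{i,j}(s),v_{i,j}(s))=(\E X_s^i\E X_s^j,\E X_s^iX_s^j)=\pi_sX_s$. By the defining property of the evolution operator $(U_{i,j}(t,s),V_{i,j}(t,s))$, the $v$-components at time $t$ are therefore $v_{i,j}(t)=V_{i,j}(t,s)(\pi_sX_s)$. Since $X_s\in\mathfrak X_s$ guarantees all the relevant moments are finite, we have
\[
\|\Phi_{t,s}X_s\|_{\rm ms}^2=\E|X_t|^2=\sum_{i=1}^d\E(X_t^i)^2=\sum_{i=1}^d v_{i,i}(t)=\sum_{i=1}^d V_{i,i}(t,s)(\pi_sX_s),
\]
and taking square roots yields \eqref{Eq10}.

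The individual calculations are routine; the one conceptual point deserving care is that the moment equations form a \emph{closed} linear system, depending on the law of $X_s$ only through $\pi_sX_s$. This is precisely what allows the genuinely infinite-dimensional operator $\Phi_{t,s}$ on $\mathfrak X_s$ to be captured by the finite-dimensional map $V(t,s)$, and it is not obvious a priori. I expect the only real bookkeeping obstacle to be checking that the symmetry convention is applied consistently across all four sums defining $\dot v_{i,j}$, so that the reduced system is genuinely well posed on $\R^{d(d+1)}$ and the matching against Proposition~\ref{Prp3} is exact.
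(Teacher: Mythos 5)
Your proof is correct and follows exactly the route the paper intends: Corollary~\ref{Corollary1} is stated as an immediate consequence of Proposition~\ref{Prp3}, obtained by observing that $(\E X_t^i\E X_t^j,\E X_t^iX_t^j)$ solves the linear system \eqref{LinearNDEb} with initial datum $\pi_sX_s$, and then reading off $\|\Phi_{t,s}X_s\|_{\rm ms}^2=\sum_i v_{i,i}(t)$. Your explicit attention to the symmetry convention for indices with $k>\ell$ and to the linearity of the evolution operator in the initial data is a welcome clarification of details the paper leaves implicit.
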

\begin{remark}
  It is interesting to compare the above equations with the ordinary differential equations for the first moment and second moments of linear stochastic differential equations, see e.g.~\cite[Section 6.2]{Khasminskii_12_1}.
\end{remark}
The proofs of the following preparatory results  are straightforward.
\begin{lemma}\label{Lemma_1}
  Let $\gamma\in\R$  be such that  that the linear MS-RDS $\Phi_{t,s}:\mathfrak X_s\rightarrow \mathfrak X_t$ generated by \eqref{Eq1}  admits an exponential dichotomy with the growth rate $\gamma$ and a decomposition
  \[
  \mathfrak X_t=U_\gamma(t) \oplus S_\gamma (t).
  \]
  Then the subspace $S_\gamma (t)$ is uniquely determined, i.e.,~if the linear MS-RDS $\Phi_{t,s}$ also admits an exponential dichotomy with the growth rate $\gamma$ and another decomposition
  \[
  \mathfrak X_t=\widehat U_\gamma(t) \oplus  \widehat S_\gamma (t),
  \]
  then $S_\gamma (t)=\widehat S_\gamma (t)$ for all $t\in\R$.
\end{lemma}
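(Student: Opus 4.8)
The plan is to prove the two inclusions $\widehat S_\gamma(s)\subseteq S_\gamma(s)$ and $S_\gamma(s)\subseteq \widehat S_\gamma(s)$ for each fixed $s\in\R$; since the two decompositions play symmetric roles, it suffices to establish one of them and then interchange their roles. The underlying idea is that the stable subspace is characterised by the \emph{rate} at which forward trajectories grow relative to $e^{\gamma(t-s)}$: any initial condition lying in $S_\gamma(s)$ produces a trajectory that grows no faster than $e^{(\gamma-\alpha)(t-s)}$, whereas any nonzero component lying in the unstable subspace $U_\gamma(s)$ forces growth at least like $e^{(\gamma+\alpha)(t-s)}$. These two rates are separated by the gap factor $e^{2\alpha(t-s)}\to\infty$, so a trajectory cannot simultaneously be ``slow'' and carry a nontrivial unstable part.

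Concretely, let $K,\alpha$ be the dichotomy constants attached to the decomposition $\mathfrak X_t=U_\gamma(t)\oplus S_\gamma(t)$ and $\widehat K,\widehat\alpha$ those attached to $\mathfrak X_t=\widehat U_\gamma(t)\oplus\widehat S_\gamma(t)$. I would take an arbitrary $X_s\in\widehat S_\gamma(s)$, so that $\|\Phi_{t,s}X_s\|_{\rm ms}\le \widehat K\,e^{(\gamma-\widehat\alpha)(t-s)}\|X_s\|_{\rm ms}$ for all $t\ge s$, and split it according to the first decomposition as $X_s=X_s^U+X_s^S$ with $X_s^U\in U_\gamma(s)$ and $X_s^S\in S_\gamma(s)$. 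By linearity of $\Phi_{t,s}$ and the triangle inequality in the mean-square norm, $\|\Phi_{t,s}X_s^U\|_{\rm ms}\le \|\Phi_{t,s}X_s\|_{\rm ms}+\|\Phi_{t,s}X_s^S\|_{\rm ms}$, and both terms on the right are bounded above by constant multiples of $e^{(\gamma-\widehat\alpha)(t-s)}$ and $e^{(\gamma-\alpha)(t-s)}$ respectively. On the other hand, the lower dichotomy estimate for $U_\gamma$ gives $\|\Phi_{t,s}X_s^U\|_{\rm ms}\ge \tfrac{1}{K}\,e^{(\gamma+\alpha)(t-s)}\|X_s^U\|_{\rm ms}$. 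Combining the upper and lower bounds and dividing through by $e^{(\gamma+\alpha)(t-s)}$ isolates $\|X_s^U\|_{\rm ms}$ and shows it is dominated by a quantity of order $e^{-(\alpha+\widehat\alpha)(t-s)}+e^{-2\alpha(t-s)}$. Letting $t\to\infty$ forces $\|X_s^U\|_{\rm ms}=0$, hence $X_s^U=0$ and $X_s=X_s^S\in S_\gamma(s)$, which yields $\widehat S_\gamma(s)\subseteq S_\gamma(s)$; exchanging the two decompositions gives the reverse inclusion and thus equality for every $s\in\R$.

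I do not expect a genuine obstacle here: this is the mean-square analogue of the classical uniqueness of the stable bundle for exponential dichotomies, and it relies only on linearity of $\Phi_{t,s}$, the triangle inequality for $\|\cdot\|_{\rm ms}$, and the positive spectral gap $2\alpha$ separating the two exponential rates. The only point requiring mild care is the bookkeeping of the (possibly distinct) constants $K,\alpha$ and $\widehat K,\widehat\alpha$ belonging to the two decompositions; this is harmless because the estimate is only ever evaluated in the limit $t\to\infty$, and the strict positivity of both $\alpha$ and $\widehat\alpha$ is exactly what drives the bound on $\|X_s^U\|_{\rm ms}$ to zero.
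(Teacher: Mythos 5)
Your argument is correct and is exactly the standard uniqueness argument for the stable subspace of an exponential dichotomy: decompose $X_s\in\widehat S_\gamma(s)$ along $U_\gamma(s)\oplus S_\gamma(s)$, trap $\|\Phi_{t,s}X_s^U\|_{\rm ms}$ between the lower bound $\frac{1}{K}e^{(\gamma+\alpha)(t-s)}\|X_s^U\|_{\rm ms}$ and an upper bound of order $e^{(\gamma-\min\{\alpha,\widehat\alpha\})(t-s)}$, and let $t\to\infty$. The paper omits the proof entirely (it declares the preparatory lemmas straightforward), and your write-up supplies precisely the intended argument, with the bookkeeping of the two pairs of constants handled correctly.
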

The subspaces $S_\gamma(t)$ of an exponential dichotomy with the growth rate $\gamma$  are its  stable subspaces. The following lemma provides  an inclusion relation between these stable subspaces. Its  proof follows directly from the definition of an exponential dichotomy.
\begin{lemma}\label{Lemma_2}
  Let $\gamma_1<\gamma_2$ be such that the linear MS-RDS admits an exponential dichotomy with the growth rates  $\gamma_1$ and $\gamma_2$. Then $S_{\gamma_1}(t)\subset S_{\gamma_2}(t)$ for all $t\in\R$.
\end{lemma}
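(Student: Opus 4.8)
The plan is to fix $t\in\R$, take an arbitrary $\xi\in S_{\gamma_1}(t)$, and show that $\xi$ already lies in $S_{\gamma_2}(t)$ by decomposing it with respect to the $\gamma_2$-splitting and proving that its unstable component must vanish. Denote by $K_1,\alpha_1$ the constants of the dichotomy with growth rate $\gamma_1$ and by $K_2,\alpha_2$ those for $\gamma_2$. Using $\mathfrak X_t=U_{\gamma_2}(t)\oplus S_{\gamma_2}(t)$, write $\xi=\xi_u+\xi_s$ with $\xi_u\in U_{\gamma_2}(t)$ and $\xi_s\in S_{\gamma_2}(t)$; the claim reduces to showing $\xi_u=0$.

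The key step uses only the three forward growth estimates of Definition~\ref{Exponential Dichotomy} together with the triangle inequality. For $\tau\ge t$, linearity of $\Phi$ gives $\Phi_{\tau,t}\xi_u=\Phi_{\tau,t}\xi-\Phi_{\tau,t}\xi_s$, hence
\[
\|\Phi_{\tau,t}\xi_u\|_{\rm ms}\le \|\Phi_{\tau,t}\xi\|_{\rm ms}+\|\Phi_{\tau,t}\xi_s\|_{\rm ms}.
\]
Bounding the unstable part from below (since $\xi_u\in U_{\gamma_2}(t)$) and the two stable parts from above (since $\xi\in S_{\gamma_1}(t)$ and $\xi_s\in S_{\gamma_2}(t)$), I would arrive at
\[
\tfrac{1}{K_2}e^{(\gamma_2+\alpha_2)(\tau-t)}\|\xi_u\|_{\rm ms}
\le
K_1 e^{(\gamma_1-\alpha_1)(\tau-t)}\|\xi\|_{\rm ms}
+K_2 e^{(\gamma_2-\alpha_2)(\tau-t)}\|\xi_s\|_{\rm ms}.
\]

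I would then let $\tau\to\infty$. Dividing through by $e^{(\gamma_2+\alpha_2)(\tau-t)}$, the two terms on the right carry the exponents $\gamma_1-\alpha_1-(\gamma_2+\alpha_2)$ and $-2\alpha_2$, both strictly negative: the second because $\alpha_2>0$, and the first because $\gamma_1<\gamma_2$ forces $\gamma_1-\alpha_1<\gamma_2+\alpha_2$. The right-hand side therefore tends to $0$, which forces $\|\xi_u\|_{\rm ms}=0$, i.e.\ $\xi=\xi_s\in S_{\gamma_2}(t)$. As $t$ and $\xi$ were arbitrary, this yields $S_{\gamma_1}(t)\subset S_{\gamma_2}(t)$ for all $t\in\R$.

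The argument is short, and its one genuinely load-bearing feature is that \emph{no} invariance of the splittings under $\Phi$ is required: everything follows from forward-in-time estimates applied to the fixed vectors $\xi,\xi_u,\xi_s$ taken at the single time $t$. The only point to watch is the exponent comparison in the limit, and here the hypothesis $\gamma_1<\gamma_2$ is precisely what guarantees that the expanding lower-bound rate $\gamma_2+\alpha_2$ strictly dominates both competing rates, so the limit argument closes cleanly.
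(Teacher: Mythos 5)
Your proof is correct: the decomposition of $\xi$ in the $\gamma_2$-splitting, the triangle inequality combined with the three forward estimates, and the limit $\tau\to\infty$ (using $\gamma_1-\alpha_1<\gamma_2+\alpha_2$) force $\xi_u=0$, and no invariance of the splittings is needed. The paper omits the proof entirely, stating only that it ``follows directly from the definition of an exponential dichotomy,'' and your argument is precisely the standard realization of that remark.
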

One of the main results of this paper is the following characterisation of the dichotomy spectrum.
\begin{theorem}[Spectral Theorem]\label{SpectralTheory}
  Suppose that the coefficient functions in the linear mean-field stochastic differential equation \eqref{Eq2} satisfy
  \begin{equation}\label{BoundedCondition}
    \max\big\{|a_{ij}(t)|, |b_{ij}(t)|, |c_{ij}(t)|,|d_{ij}(t)|\big\}\leq m \qquad\hbox{for } i,j\in\{1,\dots,d\}\mbox{ and } t\in\R
  \end{equation}
  with some $m>0$. Then the dichotomy spectrum $\Sigma$ is the disjoint union of at most $d(d+1)$ compact intervals $[a_1,b_1]$, $\dots$, $[a_n,b_n]$ with $a_1 \leq b_1 <  a_2 \leq b_2 \leq \dots < a_n \leq b_n$. Furthermore, for each $s\in \R$, there exists a filtration of subspaces
  \[
  \{0\}\subsetneq V_1(s)\subsetneq V_2(s)\subsetneq\dots\subsetneq V_n(s)=\mathfrak X_s
  \]
  which satisfies that for any $i\in\{1,\dots,n\}$, a random variable $X_s\in V_i(s)$ if and only if for any $\eps>0$, there exists $K(\eps)>0$ such that
  \[
  \|\Phi_{t,s}X_s\|_{\rm ms}\leq K(\eps) e^{(b_i+\eps)(t-s)}\quad \hbox{for } t\geq s\,.
  \]
\end{theorem}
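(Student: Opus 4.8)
The plan is to reduce the mean-square spectral problem to the classical Sacker--Sell spectral theory of the finite-dimensional linear nonautonomous ODE \eqref{LinearNDEb} in $\R^{d(d+1)}$, using the norm identity \eqref{Eq10} as the bridge. Write $\mathcal{T}(t,s)$ for the evolution operator of \eqref{LinearNDEb} and observe that its coefficients are polynomial expressions in the entries $a_{ij},b_{ij},c_{ij},d_{ij}$, hence bounded by a constant depending only on $m$ and $d$ by \eqref{BoundedCondition}; thus \eqref{LinearNDEb} generates a well-posed linear flow whose dichotomy (Sacker--Sell) spectrum $\Sigma_{\mathrm{ODE}}$ is, by the classical spectral theorem for bounded-coefficient systems, a disjoint union of at most $d(d+1)$ compact intervals. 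Identity \eqref{Eq10} rewrites $\|\Phi_{t,s}X_s\|_{\mathrm{ms}}^2=\sum_{i=1}^d V_{i,i}(t,s)(\pi_s X_s)=\mathrm{tr}_v\big(\mathcal{T}(t,s)\pi_s X_s\big)$, a fixed linear functional $\mathrm{tr}_v$ (the sum of the diagonal second-moment coordinates) composed with the linear flow applied to the moment vector $\pi_s X_s$.

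The key analytic step is a two-sided comparison on the moment cone. Since \eqref{LinearNDEb} is exactly the moment system of \eqref{Eq2}, the cone $\mathcal{C}_s:=\pi_s(\mathfrak X_s)$, whose second-moment block is a positive semidefinite matrix dominating the rank-one first-moment block, is mapped by $\mathcal{T}(t,s)$ into $\mathcal{C}_t$. On this invariant cone positive semidefiniteness forces $|z|\le C(d)\,\mathrm{tr}_v(z)$, while trivially $\mathrm{tr}_v(z)\le|z|$, so $\mathrm{tr}_v(z)\asymp|z|$ uniformly; consequently the mean-square growth rate of $X_s$ equals exactly half the ODE growth rate of the orbit through $\pi_s X_s$. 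Transferring Definition of exponential dichotomy through this comparison shows that $\Phi$ admits a mean-square exponential dichotomy with growth rate $\gamma$ precisely when $\mathcal{T}$ admits one with growth rate $2\gamma$ whose splitting is compatible with $\mathcal{C}$. Halving sends a union of at most $d(d+1)$ compact intervals to a union of at most $d(d+1)$ compact intervals, and this count transfers to $\Sigma$ even though the cone may fail to detect some intervals of $\Sigma_{\mathrm{ODE}}$; combined with openness of $\rho$ (roughness of dichotomies) and boundedness of $\Sigma$ (from the a priori bound $\|\Phi_{t,s}\|\le e^{M(t-s)}$), this yields the first assertion.

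For the filtration I would define $V_i(s)$ directly through the stated growth condition with exponent $b_i$. Linearity of $\Phi_{t,s}$ with the triangle inequality makes each $V_i(s)$ a genuine linear subspace of $\mathfrak X_s$, since the maximum of two rates below $b_i$ is again below $b_i$, and monotonicity in $i$ is immediate; by Lemma~\ref{Lemma_1} and Lemma~\ref{Lemma_2} these coincide with the stable subspaces $S_\gamma(s)$ for $\gamma$ in the resolvent gap just above $[a_i,b_i]$. The comparison identifies $X_s\in V_i(s)$ with $\pi_s X_s$ having ODE-rate at most $2b_i$, i.e. lying in the sum of the first $i$ Sacker--Sell spectral bundles, whence strictness of the inclusions and the bound $n\le d(d+1)$ are inherited from the ODE filtration.

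The main obstacle is precisely that $\pi_s$ is quadratic and highly non-injective, so there is no linear identification of $\mathfrak X_s$ with $\R^{d(d+1)}$ and one cannot simply pull spectral subspaces back. The burden is to show that the growth rate alone---a linear datum of $\Phi$---determines membership in $V_i(s)$ and that $V_i(s)$ is closed, which is exactly what the cone-domination estimate $\mathrm{tr}_v(z)\asymp|z|$ secures; without positive semidefiniteness of the second-moment block the single functional $\mathrm{tr}_v$ could vanish to higher order along expanding ODE directions and the two spectra would decouple. Verifying that $\mathcal{C}$ is genuinely invariant and that the dichotomy splitting of $\mathcal{T}$ can be chosen compatibly with it is the delicate part of the argument.
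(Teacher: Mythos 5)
Your reduction to the Sacker--Sell spectrum of the moment ODE \eqref{LinearNDEb} contains a genuine gap at exactly the point you flag as ``the delicate part'': the claimed equivalence that $\Phi$ admits a mean-square exponential dichotomy with growth rate $\gamma$ if and only if $\mathcal{T}$ admits one with growth rate $2\gamma$ ``compatible with the cone'' is asserted, not proved, and the direction you actually need is the hard one. Your cone estimate $\mathrm{tr}_v(z)\asymp|z|$ on $\pi_s(\mathfrak X_s)$ is correct (positive semidefiniteness of the second-moment block gives $|v_{ij}|\le\tfrac12(v_{ii}+v_{jj})$, and the first-moment block is dominated by it), and it does yield that the \emph{orbitwise} growth rate of $\|\Phi_{t,s}X_s\|_{\rm ms}^2$ equals that of $|\mathcal{T}(t,s)\pi_sX_s|$. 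But an exponential dichotomy is not a statement about individual growth rates: it requires a complemented splitting $\mathfrak X_s=U_\gamma(s)\oplus S_\gamma(s)$ of an infinite-dimensional space with uniform two-sided estimates. To conclude $\Sigma\subset\tfrac12\Sigma_{\mathrm{ODE}}$ (which is what bounds the number of intervals) you must produce such a splitting of $\mathfrak X_s$ from a dichotomy of $\mathcal{T}$ in $\R^{d(d+1)}$; since $\pi_s$ is quadratic and non-injective and its image is a cone rather than a subspace, the preimage of the ODE stable bundle is not obviously a complemented subspace carrying the unstable lower bound on a complement, and nothing in the proposal constructs one. The phrase ``the dichotomy splitting of $\mathcal{T}$ can be chosen compatibly with $\mathcal{C}$'' names the problem without solving it, and the remark that ``the cone may fail to detect some intervals of $\Sigma_{\mathrm{ODE}}$'' shows the correspondence between the two spectra is not the clean bijection-up-to-halving the argument relies on.

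The paper avoids this transfer entirely. It never invokes the Sacker--Sell theorem for \eqref{LinearNDEb}; it only uses the one easy consequence of \eqref{Eq10} that $\|\Phi_{\bar t,t}X\|_{\rm ms}^2$ is a fixed linear functional of the evolved moment vector. Step~2 of the paper then argues directly: a strictly increasing chain $S_{\gamma_0}(t)\subsetneq\dots\subsetneq S_{\gamma_n}(t)$ with $n>d(d+1)$ would produce vectors $X_t^1,\dots,X_t^n$ whose images $\pi_tX_t^i$ are forced to be linearly dependent in $\R^{d(d+1)}$, and the resulting identity $\|\Phi_{\bar t,t}X_t^k\|_{\rm ms}^2\le\sum_j|\alpha_j|\,\|\Phi_{\bar t,t}X_t^j\|_{\rm ms}^2$ pushes $X_t^k$ into $S_{\gamma_{k-1}}(t)$, a contradiction. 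Here the dichotomies being counted are \emph{given} (by the definition of the resolvent set), so no splitting ever has to be constructed from the ODE side; the finite-dimensionality of the moment space is used only to bound the length of the chain of stable subspaces. Your Step~1-type bounds ($\Sigma$ bounded, $\rho$ open) and the definition of $V_i(s)$ via growth rates do match the paper, but the central counting argument needs to be replaced by something like the paper's linear-dependence argument, or else the dichotomy-transfer claim must be proved in full.
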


\begin{proof}
  The proof is divided into several steps.\\
  \noindent
  \emph{Step 1.} First it will be shown that $(-\infty,-\Gamma)\subset\rho$ and $(\Gamma,\infty)\subset \rho$, where $\Gamma := 2 dm +2d^2m^2$. Let $X_s\in \mathfrak X_s$ be arbitrary, and define
  \[
  \alpha(t):=\max_{i,j\in\{1,\dots,d\}}\big\{\E X^i_t X^j_t, \E X^i_t \E X^j_t\big\}\quad\hbox{for } t\geq s.
  \]
  By the inequalities $\E XY\leq \sqrt{\E X^2 \E Y^2}$ and $(\E X)^2\leq \E X^2$, it follows that
  \[
  \alpha(t)=\max_{1\leq i\leq d} (\E X_t^i)^2\leq \|\Phi(t,s)X_s\|^2_{\rm ms}.
  \]
  Then by Corollary~\ref{Corollary1},
  \[
    \alpha(t)\leq \alpha(s)+2\Gamma \int_s^t  \alpha(u)\,\rmd u\,,
  \]
  and Gronwall's inequality then yields
  \[
  \|\Phi(t,s)X_s\|^2_{\rm ms}\leq d\alpha(t)\leq de^{2\Gamma(t-s)}\alpha(s)\leq de^{2\Gamma(t-s)}\|X_s\|^2_{\rm ms}.
  \]
  This proves that $(\Gamma,\infty) \subset \rho$. Time reversal of the equations in Corollary~\ref{Corollary1} leads to
  \[
  \|\Phi(t,s)X_s\|^2_{\rm ms}
  \geq
  \frac{1}{d}e^{-2\Gamma (t-s)}\|X_s\|^2_{\rm ms} \quad \text{for } (t,s)\in\R_{\geq}^2\,,
  \]
  which proves $(-\infty,\Gamma)\subset \rho$.\\
  \emph{Step 2.} It will be shown that for any $t\in\R$, the set
  \[
  \big\{S_\gamma(t): \gamma\in \rho\cap (-\Gamma-1,\Gamma+1)\big\}
  \]
  consists of at most $d(d+1)+1$ elements. Suppose the contrary, i.e.,~there exist $n+1$ numbers $\gamma_0<\gamma_{1}<\dots<\gamma_{n}$ in $\rho\cap (-\Gamma-1,\Gamma+1)$, where $n> d(d+1)$, such that
  \[
  S_{\gamma_i}(t)\not= S_{\gamma_j}(t)\qquad\hbox{for } i\not=j.
  \]
  Then by Lemma~\ref{Lemma_2},
  \begin{equation*}
  S_{\gamma_0}(t)\subsetneq S_{\gamma_{1}}(t) \subsetneq \dots\subsetneq S_{\gamma_{n}}(t).
  \end{equation*}
  Thus, there exist $X_t^1,\dots,X_t^n$ such that
  \[
  X_t^i\in S_{\gamma_i}(t),\quad X_t^i\not\in S_{\gamma_{i-1}}(t)\qquad\hbox{for } i\in\{1,\dots,n\}\,.
  \]
  By definition of the $\gamma_i$, there exist $K,\alpha>0$ and complementary subspaces $U_{\gamma_i}(t)$ such that $\mathfrak X_t = U_{\gamma_i}(t)\oplus S_{\gamma_i}(t)$ and
  \begin{equation}\label{Eq21}
  \|\Phi_{\bar t,t}X_t\|_{\rm ms}\leq K e^{(\gamma_i-\alpha)(\bar t-t)}\|X_t\|_{\rm ms}\qquad\hbox{for } X_t\in S_{\gamma_i}(t) \mbox{ and } \bar t\geq t
  \end{equation}
  and
  \begin{equation}\label{Eq22}
  \|\Phi_{\bar t,t}X_t\|_{\rm ms}\geq \frac{1}{K} e^{(\gamma_i+\alpha)(\bar t-t)}\|X_t\|_{\rm ms}\qquad\hbox{for } X_t\in U_{\gamma_i}(t)\mbox{ and }t\geq t\,.
  \end{equation}
  Since $\R^{\frac{d(d+1)}{2}}\times\R^{\frac{d(d+1)}{2}}$ is $d(d+1)$-dimensional, it follows that there exist $k\leq n$ and  $\alpha_1$, $\dots$, $\alpha_{k-1}$ with  $\alpha_1^2+\dots+\alpha_{k-1}^2 \not= 0$ and
  \begin{equation}\label{Eq6}
  \pi_t X_t^k=\alpha_1\pi_t X_t^1+\dots+\alpha_{k-1}\pi_t X^{k-1}_t.
  \end{equation}
  Consequently, by Corollary \ref{Corollary1},
  \begin{align*}
  \|\Phi_{\bar t,t}X_t^k\|^2_{\rm ms}
  &=
  \sum_{i=1}^dV_{i,i}(\bar t, t) (\pi_tX_t^k)=
  \sum_{i=1}^d\sum_{j=1}^{k-1}\alpha_jV_{i,i}(\bar t, t) (\pi_tX_t^j)\\
  &\leq
  \sum_{j=1}^{k-1}|\alpha_j|\sum_{i=1}^d\sum_{j=1}^{k-1}V_{i,i}(\bar t, t) (\pi_tX_t^j)\\
  &=
  \sum_{j=1}^{k-1}|\alpha_j|\left(\|\Phi_{\bar t,t}X_t^1\|^2_{\rm ms}+\dots+\|\Phi_{\bar t,t}X_t^{k-1}\|^2_{\rm ms}\right).
  \end{align*}
  By definition of $\gamma_i$ and \eqref{Eq21}
  \[
  \|\Phi_{\bar t,t}X_t^k\|^2_{\rm ms}
  \leq
  (k-1)K\left(\sum_{j=1}^{k-1}|\alpha_j|\right)\left(\sum_{j=1}^{k-1}\|X_t^j\|^2_{\rm ms}\right)e^{(\gamma_{k-1}-\alpha)(\bar t-t)}.
  \]
  Hence, it follows by \eqref{Eq21} and \eqref{Eq22}  that $X_t^k\in S_{\gamma_{k-1}}(t)$, which leads to a contradiction.\\
  \emph{Step 3.} As proved in Step~2, for $t\in\R$, let $S_0(t) \subsetneq S_1(t) \subsetneq \dots \subsetneq S_n(t)$ with $n\leq d(d+1)$
  satisfy
  \[
  \big\{S_\gamma(t): \gamma\in \rho\cap  (-\Gamma-1,\Gamma+1)\big\}=\{S_0(t),S_1(t),\dots,S_n(t)\big\}.
  \]
  By Step 1, it follows that $S_0(t)=\{0\}$ and $S_n(t)=\mathfrak X_t$. For each $i\in\{0,\dots,n\}$, define
  \[
  \mathcal I_i:=\big\{\gamma \in \rho\cap (-\Gamma-1,\Gamma+1): S_\gamma(t)=S_i(t)\big\}.
  \]
  It will be  shown that $\mathcal I_i=(b_i,a_{i+1})$, where
  $$
  b_{i}=\inf\{\gamma:\gamma \in \mathcal I_i\} \quad \mbox{and} \quad a_{i+1}=\sup\{\gamma:\gamma \in \mathcal I_i\} \qquad \text{for } i \in\{0,\dots,n\}\,.
  $$
  First, let $\gamma\in \mathcal I_i$  be arbitrary. By the definition of $\mathcal I_i$, there exist $K,\alpha>0$ and a decomposition $\mathfrak X_t=U(t)\oplus S_{i}(t)$ and
  \begin{equation*}
  \|\Phi_{\bar t,t}X_t\|_{\rm ms}\leq K e^{(\gamma-\alpha)(\bar t-t)}\|X_t\|_{\rm ms}\qquad\hbox{for } X_t\in S_{i}(t) \mbox{ and } \bar t\geq t
  \end{equation*}
  and
  \begin{equation*}
  \|\Phi_{\bar t,t}X_t\|_{\rm ms}\geq \frac{1}{K} e^{(\gamma+\alpha)(\bar t-t)}\|X_t\|_{\rm ms}\qquad\hbox{for } X_t\in U(t) \mbox{ and } \bar t\geq t\,.
  \end{equation*}
  This implies that $(\gamma-\alpha,\gamma+\alpha) \subset  \mathcal I_i$, so $\mathcal I_i$ is open.   It can be shown similarly  that $\mathcal I_i$ is
  connected. Hence $\mathcal I_i=(a_i,b_i)$. Combining this result and Step 1 gives
  \[
  \rho=(-\infty, a_1)\cup (b_1,a_2)\cup\dots\cup (b_{n-1},a_n)\cup (b_n,\infty),
  \]
  which implies that
  \[
  \Sigma= [a_1,b_1]\cup\dots\cup [a_{n},b_n].
  \]
  To conclude the proof, the filtration corresponding to the spectral intervals is constructed as follows:  for $t\in \R$,
  $V_0(t):=\{0\}, V_n(t):=\mathfrak X_t$, and
  \[
  V_i(t):=S_{\gamma}(t),\quad \mbox{where }\gamma\in (b_i,a_{i+1}), \quad i\in\{1,\dots,n-1\}\,.
  \]
  Due to Lemma~\ref{Lemma_1}, the definition of $V_i$ is independent of $\gamma \in (b_i,a_{i+1})$ for $i\in\{1,\dots,n-1\}$. The strict inclusion $V_i \subsetneq V_{i+1}$ for $i\in\{0,\dots,n-1\}$ follows from the construction of the open interval $(b_i,a_{i+1})$ above. Finally, the dynamical characterisation of $V_i$ follows from the definition of $(b_i,a_{i+1})$ and the definition of exponential dichotomy. This completes the proof.
\end{proof}
\section{Bifurcation of a mean-square random attractor}
A mean-square random attractor was defined in \cite{Kloeden_12_2} as the pullback attractor of the nonautonomous dynamical system formulated as a mean-square random dynamical system.

Specifically, a family $\mathcal A=\{A_t\}_{t\in\R}$ of nonempty compact subsets of $\mathfrak X$ with $A_t\subset \mathfrak X_t$ for each $t\in\R$ is called a \emph{pullback attractor} if it pullback attracts all uniformly bounded families $\mathcal D=\{D_t\}_{t\in\R}$ of subsets of $\{\mathfrak X_t\}_{t\in\R}$, i.e.,
\[
\lim_{s\to-\infty} \hbox{dist}(\phi(t,s,D_{s}), A_t) = 0.
\]
Uniformly bounded here means that there is an $R > 0$ such that $\|X\|_{\rm ms} \leq R$  for all $X \in D_t$ and $t \in \R$.

The existence of pullback attractors follows from that of an absorbing family. A uniformly bounded family $\cB=\{B_t\}_{t\in\R}$ of nonempty closed subsets of $\{\mathfrak X_t\}_{t\in\R}$ is called a \emph{pullback absorbing family} for a MS-RDS $\phi$ if for each $t\in\R$ and every uniformly bounded family $\mathcal D=\{D_t\}_{t\in\R}$ of nonempty subsets of $\{\mathfrak X_t\}_{t\in\R}$, there exists some $T=T(t,\mathcal D)\in\R^+$ such that
\[
\phi(t,s,D_s)\subseteq B_t\quad\hbox{for } s\in\R\hbox{ with } s\leq t-T.
\]

\begin{theorem}\label{PullbackAttractor}
Suppose that a MS-RDS $\phi$ has a positively invariant pullback absorbing uniformly bounded family $\mathcal B=\{\mathcal B_t\}_{t\in\R}$ of nonempty closed subsets of $\{\mathfrak X_t\}_{t\in\R}$ and that the mappings $\phi(t,s,\cdot):\mathfrak X_s\rightarrow \mathfrak X_t$ are pullback compact (respectively, eventually or asymptotically compact) for all $(t,s)\in \R^2_{\geq}$. Then, $\phi$ has a unique global pullback attractor $\mathcal A=\{\mathcal A_t\}_{t\in\R} $ with its component sets  determined by
\[
A_t=\bigcap_{s\leq t} \phi(t,s,B_s)  \qquad\hbox{for }  t\in\R.
\]
\end{theorem}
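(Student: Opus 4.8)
The plan is to realise $\mathcal A$ as the pullback omega-limit set of the absorbing family $\mathcal B$ and then verify the four defining properties of a pullback attractor: compactness of each $A_t$, invariance, pullback attraction, and uniqueness. The starting observation is that positive invariance of $\mathcal B$ forces the images $\phi(t,s,B_s)$ to be nested: for $\sigma\leq s\leq t$, the semigroup property together with $\phi(s,\sigma,B_\sigma)\subseteq B_s$ and the fact that $\phi(t,s,\cdot)$ preserves inclusions gives $\phi(t,\sigma,B_\sigma)=\phi(t,s,\phi(s,\sigma,B_\sigma))\subseteq\phi(t,s,B_s)$. Hence $\{\phi(t,s,B_s)\}_{s\leq t}$ is a decreasing family as $s\to-\infty$, so the intersection defining $A_t$ is a nested intersection, which is the structural fact driving the whole argument.

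First I would establish that each $A_t$ is nonempty and compact. The (pullback/eventual/asymptotic) compactness hypothesis guarantees that $\phi(t,s,B_s)$ is relatively compact once $s$ is sufficiently negative; combined with the nestedness above, the closures $\overline{\phi(t,s,B_s)}$ form a decreasing family of nonempty compact sets, so by the finite intersection property $A_t$ is nonempty and compact. Here one also checks that this intersection of closures agrees with the intersection of the images themselves, which is what licenses the clean formula $A_t=\bigcap_{s\leq t}\phi(t,s,B_s)$ without explicit closure bars.

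Next I would prove invariance, $\phi(t,s,A_s)=A_t$ for $(t,s)\in\R^2_{\geq}$. The forward inclusion is direct: since $A_s\subseteq\overline{\phi(s,\sigma,B_\sigma)}$ for every $\sigma\leq s$, continuity of $\phi(t,s,\cdot)$ and the semigroup property give $\phi(t,s,A_s)\subseteq\overline{\phi(t,\sigma,B_\sigma)}$, and intersecting over $\sigma$ yields the claim. The reverse inclusion $A_t\subseteq\phi(t,s,A_s)$ is the technical heart: given $y\in A_t$, approximate it by points $\phi(t,\sigma_n,b_n)=\phi(t,s,\phi(s,\sigma_n,b_n))$ with $b_n\in B_{\sigma_n}$ and $\sigma_n\to-\infty$; asymptotic compactness makes the intermediate points $w_n:=\phi(s,\sigma_n,b_n)$ precompact, so a subsequence converges to some $w$, which lies in every $\overline{\phi(s,\sigma,B_\sigma)}$ by nestedness, hence $w\in A_s$, and $\phi(t,s,w)=y$ by continuity. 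For pullback attraction, fix $\mathcal D$ and $t$; for any intermediate time $\tau\leq t$ the absorbing property gives $\phi(\tau,s,D_s)\subseteq B_\tau$ for all $s$ sufficiently negative, whence $\phi(t,s,D_s)\subseteq\phi(t,\tau,B_\tau)$ and therefore $\hbox{dist}(\phi(t,s,D_s),A_t)\leq\hbox{dist}(\overline{\phi(t,\tau,B_\tau)},A_t)$; letting $\tau\to-\infty$ and invoking the elementary fact that the Hausdorff semidistance of a decreasing sequence of compact sets to their intersection tends to zero closes the estimate. Uniqueness then follows in the standard way, since any two compact invariant pullback-attracting families must pullback attract each other and hence coincide.

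The step I expect to be the main obstacle is the reverse invariance inclusion $A_t\subseteq\phi(t,s,A_s)$, because in the infinite-dimensional mean-square spaces $\mathfrak X_t$ one cannot rely on local compactness and must extract convergent subsequences solely from the compactness hypothesis, coordinating the diagonal choice of approximating preimages with the continuity of $\phi(t,s,\cdot)$ in the $\|\cdot\|_{\rm ms}$-topology.
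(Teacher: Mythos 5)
The paper does not actually prove Theorem~\ref{PullbackAttractor}: it is stated without proof as a known existence result for pullback attractors of nonautonomous (two-parameter) semigroups, imported from \cite{Kloeden_11_2} and \cite{Kloeden_12_2}. Your argument is the standard proof of that result, and it is essentially sound: the nestedness $\phi(t,\sigma,B_\sigma)\subseteq\phi(t,s,B_s)$ for $\sigma\leq s$ coming from positive invariance, the finite-intersection argument for nonemptiness and compactness of $A_t$, the two invariance inclusions (with the subsequence extraction for the reverse one), the attraction estimate via an intermediate absorption time $\tau$, and the standard uniqueness argument are exactly the right steps. One caveat: your compactness step as written (``the closures $\overline{\phi(t,s,B_s)}$ form a decreasing family of nonempty compact sets'') is only literally valid in the pullback-compact and eventually compact cases; under mere asymptotic compactness the individual sets $\overline{\phi(t,\tau,B_\tau)}$ need not be compact for any fixed $\tau$, and one must instead argue sequentially (extract convergent subsequences from $\phi(t,s_n,x_n)$ with $s_n\to-\infty$, then diagonalise to get compactness of $A_t$ and attraction). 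The same remark applies to your identification of $\bigcap_s\phi(t,s,B_s)$ with $\bigcap_s\overline{\phi(t,s,B_s)}$ and to the final limit $\mathrm{dist}(\overline{\phi(t,\tau,B_\tau)},A_t)\to 0$. These are routine modifications, so I would count your proposal as a correct proof sketch of the stated theorem, supplied where the paper gives none.
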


Consider the nonlinear mean-field SDE
\begin{equation} \label{MNsde}
  \rmd X_t=\left(\alpha X_t+\beta \E X_t-X_t\E X_t^2\right)\,\rmd t+ X_t  \, \rmd W_t
\end{equation}
with real-valued parameters $\alpha,\beta$. Note that the theory in Section~\ref{sec_mssde} can be easily extended to include the second moment of the solution in the equation.

This SDE has the steady state solution $\bar{X}(t) \equiv 0$. Linearising along this solution gives the
bi-linear mean-field SDE
\begin{equation} \label{Itosde}
  \rmd Z_t =  \left(\alpha Z_t+\beta \E Z_t\right)\, \rmd t+Z_t \, \rmd W_t\,.
\end{equation}

\begin{theorem} \label{dichthm}The dichotomy spectrum of the linear MS-RDS $\Phi$ generated by \eqref{Itosde} is given by
\[
\Sigma=
\left\{
\begin{array}{ll}
\{\alpha+1/2\}\cup\{\alpha+\beta\} & \quad \hbox{if } \beta>1/2\,,\\
\{\alpha+1/2\} & \quad \hbox{if } \beta\leq 1/2\,.
\end{array}
\right.
\]
\end{theorem}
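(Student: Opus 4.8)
The plan is to reduce everything to the scalar moment system and read the spectrum off explicit solutions. Here $d=1$ with $A\equiv\alpha$, $B\equiv\beta$, $C\equiv1$, $D\equiv0$, so Proposition~\ref{Prp3} and Corollary~\ref{Corollary1} specialise to the two-dimensional triangular linear system
\begin{align*}
\dot u &= 2(\alpha+\beta)\,u,\\
\dot v &= (2\alpha+1)\,v + 2\beta\,u,
\end{align*}
for $u(t)=(\E Z_t)^2$ and $v(t)=\E Z_t^2$, together with the identity $\|\Phi_{t,s}Z_s\|_{\rm ms}^2=v(t)$; the initial data satisfy $u(s)=(\E Z_s)^2$, $v(s)=\E Z_s^2$ and range over all pairs $0\le u(s)\le v(s)$ as $Z_s$ ranges over $\mathfrak X_s$. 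First I would solve this explicitly. Writing $\tau:=t-s$, for $\beta\neq\tfrac12$ one obtains
\[
v(t)=\Big(v(s)-\tfrac{2\beta}{2\beta-1}u(s)\Big)e^{(2\alpha+1)\tau}+\tfrac{2\beta}{2\beta-1}u(s)\,e^{2(\alpha+\beta)\tau},
\]
while the resonant case $\beta=\tfrac12$ gives $v(t)=\big(v(s)+u(s)\tau\big)e^{(2\alpha+1)\tau}$.

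The analytic heart of the argument is to extract two universal bounds valid for every $Z_s$. A short case distinction on the sign of the first coefficient $v(s)-\tfrac{2\beta}{2\beta-1}u(s)$ (which is negative precisely when $\beta>\tfrac12$ and $Z_s$ is close to deterministic) yields the lower bound $v(t)\ge v(s)\,e^{(2\alpha+1)\tau}$ for all $\tau\ge0$; this says the mean-square norm always grows at least at rate $\alpha+\tfrac12$, and it immediately gives $(-\infty,\alpha+\tfrac12)\subset\rho$. Dually, estimating both coefficients by a constant multiple of $v(s)$ produces an upper bound of growth rate $\max\{\alpha+\tfrac12,\alpha+\beta\}$ (with at worst a polynomial correction in the resonant case), which gives $(\max\{\alpha+\tfrac12,\alpha+\beta\},\infty)\subset\rho$.

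For $\beta\le\tfrac12$ these two facts already force $\Sigma=\{\alpha+\tfrac12\}$: every $\gamma\neq\alpha+\tfrac12$ lies in $\rho$ (the whole space is stable above the point and unstable below it), while no dichotomy can hold at $\alpha+\tfrac12$, since the lower bound and the matching upper bound (growth rate at most $\alpha+\tfrac12$) force both the stable and the unstable subspace of any putative dichotomy to be trivial, contradicting $\mathfrak X_s\neq\{0\}$. For $\beta>\tfrac12$ the extra work is to fill the gap $(\alpha+\tfrac12,\alpha+\beta)$. Here I would exhibit the explicit splitting $\mathfrak X_s=\R\mathbf 1\oplus W_s$, where $W_s:=\{Z_s\in\mathfrak X_s:\E Z_s=0\}$ is the closed hyperplane of centred variables and $\R\mathbf 1$ the line of deterministic constants. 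On $W_s$ one has $u(s)=0$, hence $v(t)=v(s)e^{(2\alpha+1)\tau}$ with growth rate exactly $\alpha+\tfrac12$; on $\R\mathbf 1$ one has $u(s)=v(s)$ and a direct estimate gives $v(t)\ge v(s)e^{2(\alpha+\beta)\tau}$, growth rate exactly $\alpha+\beta$. Taking the dichotomy constant $\min\{\gamma-(\alpha+\tfrac12),(\alpha+\beta)-\gamma\}$ verifies an exponential dichotomy in the sense of Definition~\ref{Exponential Dichotomy} for each $\gamma$ in the gap, so the two candidate points are isolated; sharpness of the universal bounds, tested on centred variables and on $\mathbf 1$ respectively, then places both $\alpha+\tfrac12$ and $\alpha+\beta$ in $\Sigma$.

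I expect the main obstacle to be the uniform bounds in the second step: the coefficient $v(s)-\tfrac{2\beta}{2\beta-1}u(s)$ changes sign, so the naive term-by-term estimate fails and one must compare the exponentials $e^{2(\alpha+\beta)\tau}$ and $e^{(2\alpha+1)\tau}$ in the correct order while exploiting the constraint $u(s)\le v(s)$, and the resonant case $\beta=\tfrac12$ must be handled separately. Note that the splitting used in the gap need not be $\Phi$-invariant, since Definition~\ref{Exponential Dichotomy} only constrains the propagated norms of data in $U_\gamma(s)$ and $S_\gamma(s)$; this is what makes the elementary decomposition $\R\mathbf 1\oplus W_s$ admissible. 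As a consistency check, $d=1$ gives the bound $d(d+1)=2$ in the Spectral Theorem~\ref{SpectralTheory}, matching the at most two spectral points found here.
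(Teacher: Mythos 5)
Your proposal is correct and follows essentially the same route as the paper: both integrate the first- and second-moment ODEs explicitly to obtain
\[
\|\Phi_{t,s}Z_s\|^2_{\rm ms}=e^{(2\alpha+1)(t-s)}\|Z_s\|^2_{\rm ms}+\tfrac{2\beta}{2\beta-1}\left(e^{2(\alpha+\beta)(t-s)}-e^{(2\alpha+1)(t-s)}\right)(\E Z_s)^2,
\]
split into the cases $\beta<1/2$, $\beta=1/2$, $\beta>1/2$, and for $\beta>1/2$ use exactly the same decomposition of $\mathfrak X_s$ into centred random variables and deterministic constants to verify a dichotomy at each $\gamma$ in the gap. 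The only (harmless) inaccuracy is your universal lower bound $v(t)\ge v(s)e^{(2\alpha+1)\tau}$, which fails for $\beta<0$ when $u(s)>0$; the correct constant there is $\tfrac{1}{1-2\beta}$, as in the paper, and this changes nothing about the spectrum.
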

\begin{proof}
Taking the expectation of two sides of \eqref{Itosde} yields that
\[
\frac{\rmd}{\rmd t}\E Z_t=(\alpha+\beta)\E Z_t,
\]
which implies that
\begin{equation}\label{Bifurcation_Eq1}
\E \Phi(t,s)Z_s=e^{(\alpha+\beta)(t-s)} \E Z_s\qquad\hbox{for } (t,s)\in \R^2_{\geq} \mbox{ and } Z_s\in \mathfrak X_s\,.
\end{equation}
Ito's formula for the function $U(x)=x^2$ then gives
\[
  \rmd Z_t^2=\left[(2\alpha+1)Z_t^2+2\beta Z_t\E Z_t\right]\,\rmd t+ 2 Z_t^2\,\rmd W_t\,.
\]
Consequently,
\[
\frac{\rmd}{\rmd t} \E Z_t^2= (2\alpha+1) \E Z_t^2+ 2\beta (\E Z_t)^2.
\]
Thus, using \eqref{Bifurcation_Eq1} for $(t,s)\in \R^2_{\geq}$ and $Z_s\in \mathfrak X_s$, it follows that
\begin{align}
\|\Phi(t,s)Z_s\|^2_{\rm ms}
&=
e^{(2\alpha+1)(t-s)} \|Z_s\|^2_{\rm ms}+2\beta\int_s^t e^{(2\alpha+1)(t-u)}(\E Z_u)^2\,\rmd u    \nonumber \\
&=
e^{(2\alpha+1)(t-s)} \|Z_s\|^2_{\rm ms}+2\beta (\E Z_s)^2\int_s^t e^{(2\alpha+1)(t-u)}e^{(2\alpha+2\beta)(u-s)}\,\rmd u  \nonumber\\
&=
e^{(2\alpha+1)(t-s)}\left(\|Z_s\|^2_{\rm ms}+2\beta  (\E Z_s)^2 \int_s^t e^{(2\beta-1)(u-s)}\,\rmd u \right)\,.\label{Bifurcation_Eq2}
\end{align}
The assertions  of the lemma will be shown for the three cases $\beta<1/2$, $\beta=1/2$ and $\beta>1/2$.\\
\emph{Case 1} ($\beta< 1/2$).
By \eqref{Bifurcation_Eq2},
\[
\|\Phi(t,s)Z_s\|^2_{\rm ms}
=
e^{(2\alpha+1)(t-s)}\left(\|Z_s\|^2_{\rm ms} + \frac{2\beta}{1-2\beta}\left(1-e^{(2\beta-1)(t-s)}\right)  (\E Z_s)^2\right),
\]
which together with the inequality $(\E X)^2 \leq \E X^2$ yields
\[
\|\Phi(t,s)Z_s\|^2_{\rm ms}
\leq
e^{(2\alpha+1)(t-s)}\left(1+\frac{2|\beta|}{(1-2\beta)}\right)\|Z_s\|^2\,,
\]
and
\[
\|\Phi(t,s)Z_s\|^2_{\rm ms}
\geq
\left\{
  \begin{array}{ll}
e^{(2\alpha+1)(t-s)}\|Z_s\|^2_{\rm ms}
 & \hbox{if } \beta\geq 0\,, \\
\frac{1}{1-2\beta}e^{(2\alpha+1)(t-s)}\|Z_s\|^2_{\rm ms} & \hbox{if } \beta<0\,.
  \end{array}
\right.
\]
This implies that $\Sigma=\{\alpha+1/2\}$.\\
\emph{Case 2} ($\beta=1/2$).
By \eqref{Bifurcation_Eq2},
\[
\|\Phi(t,s)Z_s\|^2_{\rm ms}
=
e^{(2\alpha+1)(t-s)}\left(\|Z_s\|^2_{\rm ms}  +(t-s)  (\E Z_s)^2\right),
\]
which implies that
\[
\|\Phi(t,s)Z_s\|^2_{\rm ms}
\geq
e^{(2\alpha+1)(t-s)}\|Z_s\|^2_{\rm ms}\quad \mbox{for } (t,s)\in\R_{\geq}^2\,.
\]
Let $\eps>0$ be arbitrary. Since $(t-s)\leq \frac{1}{\eps}e^{\eps (t-s)}$ for $t\geq s$  and $(\E X)^2\leq \E X^2$, it follows that
\[
\|\Phi(t,s)Z_s\|^2_{\rm ms}
\leq
\big(\textstyle 1+\frac{1}{\eps}\big)e^{(2\alpha+1+\eps)(t-s)}\|Z_s\|^2_{\rm ms}.
\]
Consequently, $\Sigma \subset \big[\alpha+\frac{1}{2}, \alpha+\frac{1}{2}+\eps\big]$. The limit $\eps\to 0$ leads to $\Sigma = \{\alpha+1/2\}$.\\
\emph{Case 3} ($\beta>1/2$). By \eqref{Bifurcation_Eq2},
\begin{equation} \label{Eq11}
\|\Phi(t,s)Z_s\|^2_{\rm ms}
=
e^{(2\alpha+1)(t-s)}\|Z_s\|^2_{\rm ms} +\frac{2\beta}{2\beta-1}\left(e^{(2\alpha+2\beta)(t-s)}-e^{(2\alpha+1)(t-s)}\right)  (\E Z_s)^2\,.
\end{equation}
Together with the inequality $(\E X)^2\leq \E X^2$,  this implies that
\[
e^{(2\alpha+1)(t-s)}\|Z_s\|^2_{\rm ms}
\leq
\|\Phi(t,s)Z_s\|^2_{\rm ms}
\leq
\left(1+\frac{2\beta}{2\beta-1}\right)e^{(2\alpha+2\beta)(t-s)}\|Z_s\|^2_{\rm ms}.
\]
Consequently, $\Sigma \subset \big[\alpha+\frac{1}{2},\alpha+\beta\big]$.
Let $\gamma \in \big(\alpha+\frac{1}{2},\alpha+\beta \big)$ be arbitrary. Choose and fix $\eps>0$ such that $(\gamma-\eps,\gamma+\eps)\subset \big(\alpha+\frac{1}{2},\alpha+\beta\big)$. The aim is to show  that $\Phi$ admits an exponential dichotomy with the growth rate $\gamma$ for the decomposition $\mathfrak X_s=U_s\oplus S_s$, where
\begin{align*}
S_s
&:=
\big\{f\in \mathfrak X_s: \E f=0\big\},\\
U_s
&:=
\big\{f\in \mathfrak X_s: f\hbox{ is independent of noise}\big\}.
\end{align*}
Obviously, any $X\in \mathfrak X_s$ can be written as $X=(X-\E X)+\E X$, with $X-\E X\in S_s$ and $\E X\in U_s$. By \eqref{Eq11}, for any $Z_s\in S_s$,
\[
\|\Phi(t,s)Z_s\|^2_{\rm ms} =e^{(2\alpha+1)(t-s)}\|Z_s\|^2_{\rm ms}\,.
\]
Now  $(\E Z_s)^2=\E Z_s^2$ for any $Z_s\in U_s$, so by \eqref{Eq11}, for $t-s\geq 1$,
\begin{align*}
\|\Phi(t,s)Z_s\|^2_{\rm ms}
&\geq
\frac{2\beta}{2\beta-1}\left(e^{(2\gamma+2\eps)(t-s)}-e^{(2\gamma-2\eps)(t-s)}\right) \|Z_s\|^2_{\rm ms}   \\
&\geq
\frac{4\eps \beta}{2\beta-1}e^{2\gamma(t-s)}\|Z_s\|^2_{\rm ms}\,.
\end{align*}
Here  the inequality $e^x\geq 1+x$ for $x\geq 0$ has been used. Thus, $\Phi$ admits an exponential dichotomy with the growth rate $\gamma$, which means that $\Sigma \subset \{\alpha+1/2\}\cup \{\alpha+\beta\}$. Considering the decomposition $S_s\oplus U_s$,  it follows that $\alpha+\frac{1}{2}, \alpha+\beta \in \Sigma$. Thus, $\Sigma =  \{\alpha+1/2\}\cup \{\alpha+\beta\}$.
This completes the proof.
\end{proof}

A globally bifurcation of pullback attractor of \eqref{MNsde} with $\beta=1$, i.e.,
\begin{equation} \label{MNsde_01}
  \rmd X_t=\left(\alpha X_t+\E X_t - X_t \E X_t^2\right)\,\rmd t+ X_t\,  \rmd W_t\,,
\end{equation}
as $\alpha$ varies, will be investigated in a series of theorems.

The  first and second moment equations  of the mean-field  SDE \eqref{MNsde_01}  are given by
\begin{align}\label{momeq1}
\frac{\rmd }{\rmd t}\E X_t  & =    (\alpha+1) \E X_t -   \E X_t  \E X_t^2\,,
\\ \label{momeq2}
\frac{\rmd }{\rmd t}\E X_t^2 & =  (2\alpha+1) \E X_t^2+ 2 (\E X_t)^2-2(\E X_t^2)^2\,,
\end{align}
where Ito's formula with $y=x^2$ was used to derive \eqref{momeq2}.  These   can be rewritten as  the system of ODEs
$$
\frac{\rmd x}{\rmd t} =x(\alpha+1 -y)\quad \mbox{and} \quad \frac{\rmd y}{\rmd t} = (2\alpha+1) y + 2x^2 - 2 y^2\,, \quad  \mbox{where } x^2 \leq y\,,
$$
which has  a steady state solution $\bar{x} =$  $\bar{y} =$  $0$ for all $\alpha$ corresponding to the zero solution $X_t \equiv 0$ of the mean-field SDE \eqref{MNsde_01}.  There also exist valid (i.e.,~with $y \geq 0$) steady state solutions $\bar{x}$  $= \pm \sqrt{(\alpha +1)/2}$ , $\bar{y} = \alpha +1$ for $\alpha > -1$ and  $\bar{x}$  $= 0$, $\bar{y} = \alpha + \frac{1}{2}$ for $\alpha \geq -\frac{1}{2}$.  It needs to be shown if there are solutions of the SDE
\eqref{MNsde_01} with these moments.

\begin{theorem}\label{PBA1}
 The MS-RDS $\phi$  generated by \eqref{MNsde_01} has a uniformly bounded positively invariant pullback absorbing family.
 \end{theorem}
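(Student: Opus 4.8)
The plan is to work entirely with the scalar second-moment function $y(t):=\E X_t^2=\|X_t\|_{\rm ms}^2$, where $X_t:=\phi(t,s,X_s)$, and to exploit the strong dissipativity coming from the term $-2(\E X_t^2)^2$ in the second-moment equation \eqref{momeq2}. First I would observe that, by Jensen's inequality, $(\E X_t)^2\le \E X_t^2=y(t)$, so that \eqref{momeq2} yields the differential inequality
\[
\dot y(t)=(2\alpha+1)y(t)+2(\E X_t)^2-2y(t)^2\le (2\alpha+3)y(t)-2y(t)^2=:g(y(t)).
\]
The right-hand side $g$ is a logistic vector field whose relevant fixed point is $y^*:=\max\{0,\alpha+\tfrac32\}$. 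Fixing any $\rho>0$ with $\rho^2>y^*$ and setting $B_t:=\{X\in\mathfrak X_t:\|X\|_{\rm ms}\le\rho\}$, the family $\mathcal B=\{B_t\}_{t\in\R}$ consists of nonempty closed sets and is uniformly bounded by construction, since every $B_t$ is the closed ball of the same radius $\rho$.

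Positive invariance then reduces to the forward invariance of the interval $[0,\rho^2]$ under $g$: since $\rho^2>y^*\ge\alpha+\tfrac32$, at the boundary value one has $g(\rho^2)=\rho^2\big(2\alpha+3-2\rho^2\big)<0$, so by the scalar comparison principle any solution of $\dot y\le g(y)$ starting in $[0,\rho^2]$ remains there for all forward time. Hence $\|X_s\|_{\rm ms}\le\rho$ forces $y(t)\le\rho^2$, i.e. $\phi(t,s,B_s)\subseteq B_t$ for all $t\ge s$.

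For the pullback absorbing property I would again invoke the scalar comparison principle: letting $z$ solve $\dot z=g(z)$ with $z(s)=y(s)$ gives $y(t)\le z(t)$ for $t\ge s$. Because $g(z)=z\big(2\alpha+3-2z\big)$ is strictly negative and bounded away from $0$ on $\{z\ge\rho^2\}$, any such $z$ with $z(s)\le R^2$ decreases into $[0,\rho^2]$ within a finite time $T=T(R)$ and then remains there by the invariance just established. Thus, given a uniformly bounded family $\mathcal D=\{D_t\}_{t\in\R}$ with $\|X\|_{\rm ms}\le R$ for all $X\in D_t$ and $t\in\R$, and given $t\in\R$, the choice $s\le t-T$ yields $\|\phi(t,s,X_s)\|_{\rm ms}^2=y(t)\le z(t)\le\rho^2$, that is $\phi(t,s,D_s)\subseteq B_t$. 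This establishes all the required properties, and feeds directly into Theorem~\ref{PullbackAttractor}.

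The step I expect to require the most care is the justification of the scalar comparison argument: \eqref{momeq2} becomes an equation for $y$ only once one knows that $t\mapsto\E X_t^2$ is differentiable with the stated derivative, and the passage to $\dot y\le g(y)$ folds in the lower-order moment through $(\E X_t)^2\le y$. One must check that the comparison principle genuinely applies to this differential inequality (it does, as $g$ is locally Lipschitz) and, crucially, that the entry time $T$ depends only on the radius $R$ of $\mathcal D$ and not on the individual initial data, which is exactly what produces a single radius $\rho$ and a uniform absorbing time for the whole family.
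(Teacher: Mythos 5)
Your proposal is correct and follows essentially the same route as the paper: the same differential inequality $\dot y\le(2\alpha+3)y-2y^{2}$ obtained from $(\E X_t)^2\le\E X_t^2$, the same logistic comparison ODE, and closed balls of a fixed mean-square radius as the absorbing sets (the paper takes radius $\sqrt{|\alpha|+2}$, a particular instance of your $\rho$ with $\rho^{2}>\max\{0,\alpha+\tfrac32\}$). The only difference is cosmetic: the paper extracts an explicit absorption time $T=\log\big(R^{2}/(|\alpha|+2)\big)$ from the integrated form of the comparison equation, whereas you argue qualitatively that $g$ is negative and bounded away from zero above the threshold, which yields the same uniform-in-$\mathcal D$ entry time.
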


\begin{proof}
Let $\alpha$ be arbitrary and  define
\[
B_t=\big\{X\in \mathfrak X_t: \|X\|_{\rm ms}\leq \sqrt{|\alpha|+2}\big\}\quad\hbox{for } t\in \R\,.
\]
Using
\[
(2\alpha+1) \E X_t^2+ 2 (\E X_t)^2-2(\E X_t^2)^2 \leq (2\alpha+3)\E X_t^2-2(\E X_t^2)^2\,,
\]
which holds since $(\E X_t)^2  \leq \E X_t^2$, the second moment equation \eqref{momeq2} gives the differential inequality
\begin{equation} \label{MNsde_inq}
\frac{\rmd }{\rmd t}\E X_t^2 \leq (2\alpha+3)\E X_t^2-2(\E X_t^2)^2
\end{equation}
Let $\mathcal D=\{D_t\}_{t\in\T}$ be a  uniformly bounded family of nonempty subsets of $\{\mathfrak X_t\}_{t\in\R}$, i.e.,  $D_t\subset \mathfrak X_t$ and there exists $R>0$ such that $\|X\|_{\rm ms} \leq R$ for all $X\in D_t$. Specifically, it will be shown that $\phi(t,s,D_s) \subset B_t$ for $t-s\geq T$, where $T$ is defined by
\begin{equation}\label{Choice1}
T:=\log\left(\frac{R^2}{|\alpha|+2}\right).
\end{equation}
Pick $X_s\in D_s$ arbitrarily and $(t,s)\in \R_{\geq}^2$ with $t-s\geq T$.  Motivated by the differential inequality \eqref{MNsde_inq}, consider the   scalar system
\begin{equation}\label{Choice2}
\dot y=(2\alpha+3) y- 2y^2,\qquad \mbox{where } y(s) \leq R^2\,.
\end{equation}
A direct computation yields
\[
  y(t)=y(s)\exp\left(\int_s^t(2\alpha+3)-2y(u)\,\rmd u\right)    \leq R^2\exp\left(\int_s^t(2\alpha+3)-2y(u)\,\rmd u\right)\,.
\]
From the definition of $T$ in \eqref{Choice1}, it follows that $\min_{s\leq u\leq t}y(u)\leq |\alpha|+2$. Furthermore, $y=0$ and $y=\alpha+\frac{3}{2}$ are stationary points of the ODE \eqref{Choice2}. For this reason, $\min_{s\leq u\leq t}y(u) \leq |\alpha|+2$ implies that $y(t) \leq |\alpha|+2$. Then from \eqref{MNsde_inq}, it follows that $y(t) \geq \|\phi(t,s)X_s\|^2_{\rm ms}$.  This means that
\begin{equation}\label{Eq15}
  \|\phi_{t,s}X_s\|^2_{\rm ms}\leq y(t)\leq |\alpha|+2\,,
\end{equation}
i.e.,~$\phi_{t,s}X_s \in B_t$ for $t-s\geq T$. Hence, $\{B_t\}_{t\in\R}$ is a pullback absorbing family for the MS-RDS $\phi$. It is clear that this family is uniformly bounded and positively invariant for the MS-RDS $\phi$.
\end{proof}

\begin{theorem}\label{PBA2}
  The MS-RDS $\phi$  generated by \eqref{MNsde_01}  has a  pullback attractor with component sets $\{0\}$ when  $\alpha < -1$.
\end{theorem}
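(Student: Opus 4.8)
The plan is to verify directly from the definition that the constant family $\mathcal{A}=\{A_t\}_{t\in\R}$ with $A_t=\{0\}$ is the pullback attractor, rather than routing through Theorem~\ref{PullbackAttractor}, whose compactness hypothesis is delicate in this mean-square setting. Since the zero process solves \eqref{MNsde_01}, one has $\phi(t,s)0=0$, so $\{0\}$ is an invariant family of nonempty compact subsets of $\{\mathfrak X_t\}_{t\in\R}$; it then remains only to show that it pullback attracts every uniformly bounded family. Concretely, I would fix $t\in\R$ and a uniformly bounded family $\mathcal D=\{D_s\}_{s\in\R}$ with $\|X\|_{\rm ms}\le R$ for all $X\in D_s$, and prove that $\sup_{X_s\in D_s}\|\phi(t,s)X_s\|_{\rm ms}\to 0$ as $s\to-\infty$, which is precisely $\hbox{dist}(\phi(t,s,D_s),\{0\})\to 0$.

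For the estimate, set $x(u):=\E\phi(u,s)X_s$ and $y(u):=\|\phi(u,s)X_s\|^2_{\rm ms}=\E(\phi(u,s)X_s)^2$, and invoke the moment equations \eqref{momeq1}--\eqref{momeq2}. The key point, and the reason the crude bound from the proof of Theorem~\ref{PBA1} is insufficient here, is that for $\alpha<-1$ one must exploit the genuine decay of the first moment instead of bounding $(\E X_t)^2$ by $\E X_t^2$. Since $\dot x=x(\alpha+1-y)$ with $\alpha+1<0$ and $y\ge 0$, I obtain $|x(u)|\le R\,e^{(\alpha+1)(u-s)}$. Feeding this into \eqref{momeq2} and discarding the favourable $-2y^2$ term yields the affine differential inequality
\[
\dot y \le (2\alpha+1)\,y + 2R^2\,e^{2(\alpha+1)(u-s)}.
\]

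A variation-of-constants comparison then gives
\[
y(t)\le R^2\,e^{(2\alpha+1)(t-s)} + 2R^2\big(e^{(2\alpha+2)(t-s)} - e^{(2\alpha+1)(t-s)}\big),
\]
where the integral $\int_s^t e^{(2\alpha+1)(t-u)}e^{2(\alpha+1)(u-s)}\,\rmd u$ evaluates to $e^{(2\alpha+2)(t-s)}-e^{(2\alpha+1)(t-s)}$. Because $\alpha<-1$ forces both $2\alpha+1<0$ and $2\alpha+2<0$, every exponential on the right-hand side tends to $0$ as $s\to-\infty$ with $t$ fixed, and the bound is uniform over $X_s\in D_s$ since it depends only on $R$ and $t-s$. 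Hence $\hbox{dist}(\phi(t,s,D_s),\{0\})\to 0$, establishing that $\{0\}$ is the pullback attractor. I expect the only real subtlety to be the comparison step, namely justifying that the differential inequality for $y$ transfers to the stated integral bound, together with the observation that one cannot here reuse the estimate $(\E X_t)^2\le\E X_t^2$ from Theorem~\ref{PBA1}, since that would only produce the non-decaying coefficient $2\alpha+3$.
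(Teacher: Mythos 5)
Your argument is correct and follows essentially the same route as the paper: both exploit the first-moment equation $\dot x = x(\alpha+1-y)$ with $y\ge 0$ to get exponential decay of $\E\phi(t,s)X_s$ at rate $\alpha+1<0$, then feed this into the variation-of-constants representation for the second moment (discarding the nonpositive $-2(\E X_t^2)^2$ term) to conclude uniform decay of $\|\phi(t,s)X_s\|_{\rm ms}^2$ as $s\to-\infty$. The only differences are cosmetic: you keep the sharper rate $2(\alpha+1)$ for the squared first moment and evaluate the convolution integral in closed form, whereas the paper uses the weaker rate $\alpha+1$ and bounds the integral by $e^{(\alpha+1)(t-s)}(t-s)$; both yield the same conclusion.
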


\begin{proof}
Let $\alpha<-1$ be arbitrary. Let $\mathcal D = \{D_t\}_{t\in\R}$ be a uniformly bounded family of nonempty subsets of $\{\mathfrak X_t\}_{t\in\R}$ with $\|X\|_{\rm ms}\leq R$ for all $X\in D_t$
where $R>0$. Let $(X_s)_{s\in\R}$ be an arbitrary sequence with $X_s\in D_s$. The moment equations \eqref{momeq1}--\eqref{momeq2} can be written as
\begin{align*}
  \frac{\rmd}{\rmd t}\E [\phi(t,s)X_s]
  &=
  \E [\phi(t,s)X_s]\left(\alpha+1- \E [\phi(t,s)X_s]^2\right)
  \\ \label{mom2}
  \frac{\rmd }{\rmd t}\E[\phi(t,s)X_s]^2
  &=
  (2\alpha+1) \E [\phi(t,s)X_s]^2+ 2 (\E [\phi(t,s)X_s])^2     -2 (\E [\phi(t,s)X_s]^2)^2.
\end{align*}
Then
\[
\E [\phi(t,s)X_s]=\E X_s \exp\left(\int_s^t\alpha+1-\E X_u^2\,\rmd u\right)\,,
\]
which implies that
\[
(\E [\phi(t,s)X_s])^2\leq e^{(\alpha+1)(t-s)} R^2\,.
\]
Moreover, by the variation of constants formula,
\begin{align*}
&\E [\phi(t,s)X_s]^2\\
=&\,
e^{(2\alpha+1)(t-s)}\E X_s^2+2\int_s^t e^{(2\alpha+1)(t-u)}\Big(\E [\phi(u,s)X_s]^2
-(\E [\phi(u,s)X_s]^2)^2\Big)\,\rmd u\\
\leq&\,
e^{(2\alpha+1)(t-s)} R^2+2 R^2\int_s^t e^{(2\alpha+1)(t-u)}e^{(\alpha+1)(u-s)}\,\rmd u\\
\leq &\,
e^{(2\alpha+1)(t-s)} R^2+2 e^{(\alpha+1)(t-s)}(t-s)R^2\,,
\end{align*}
which implies that $\lim_{s\to-\infty} \|\phi(t,s,X_s)\|_{\rm ms}=0$. Thus $\{0\}$ is the pullback attractor of \eqref{MNsde_01} in this case.
\end{proof}
\begin{theorem}\label{PBA3}
  The MS-RDS $\phi$  generated by \eqref{MNsde_01}  has a nontrivial pullback attractor when $-1 < \alpha < -\frac{1}{2}$ .
\end{theorem}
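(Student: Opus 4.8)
The plan is to establish existence of the pullback attractor from the results already in hand and then to prove nontriviality by exhibiting an explicit nonzero bounded entire solution that is forced to lie in it. By Theorem~\ref{PBA1} the MS-RDS $\phi$ admits a uniformly bounded, positively invariant, pullback absorbing family $\{B_t\}_{t\in\R}$ with $B_t=\{X\in\mathfrak X_t:\|X\|_{\rm ms}\le\sqrt{|\alpha|+2}\}$, and together with Theorem~\ref{PullbackAttractor} this yields a pullback attractor $\mathcal A=\{A_t\}_{t\in\R}$ with $A_t=\bigcap_{s\le t}\phi(t,s,B_s)$. Since any solution defined on all of $\R$ that remains inside the absorbing family must lie in $A_t$, it suffices to produce a single entire solution $\{X_t^*\}_{t\in\R}$ of \eqref{MNsde_01} with $\sup_{t}\|X_t^*\|_{\rm ms}<\infty$ and $X_t^*\not\equiv 0$.

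The candidate is dictated by the nontrivial steady state of the moment system. For $-1<\alpha<-\tfrac12$ the point $(\bar x,\bar y)=\big(\sqrt{(\alpha+1)/2},\,\alpha+1\big)$ is a valid steady state (it satisfies $\bar x^2\le\bar y$ and $\bar y\ge 0$) of $\dot x=x(\alpha+1-y)$, $\dot y=(2\alpha+1)y+2x^2-2y^2$. The key algebraic observation is that $\alpha-\bar y=-1$, so that \emph{if} a solution of \eqref{MNsde_01} has the constant moments $\E X_t=\bar x$ and $\E X_t^2=\bar y$ for all $t$, then its drift $\alpha X_t+\E X_t-X_t\E X_t^2=-X_t+\bar x$ collapses and $\{X_t\}$ solves the affine linear equation
\begin{equation*}
\rmd X_t=(\bar x-X_t)\,\rmd t+X_t\,\rmd W_t.
\end{equation*}

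It then remains to construct the stationary solution of this affine equation and to check that its moments close up self-consistently. Solving the homogeneous equation gives the fundamental factor $H_{t,u}:=\exp\big(-\tfrac32(t-u)+(W_t-W_u)\big)$, and variation of constants (no It\^o correction appears, since the inhomogeneity enters only the drift) produces, upon sending the initial time to $-\infty$,
\[
X_t^*:=\bar x\int_{-\infty}^t\exp\Big(-\tfrac32(t-u)+(W_t-W_u)\Big)\,\rmd u.
\]
I would verify convergence of this improper integral in $\mathfrak X_t$ via the increment computation $\E[H_{t,u}H_{t,v}]=e^{-(t-u)}$ for $u\le v$, which yields $\E(X_t^*)^2=2\bar x^2=\alpha+1$ and, likewise, $\E X_t^*=\bar x$; because $X^*$ is stationary these moments are constant in $t$, hence equal to $(\bar x,\bar y)$, so $X^*$ indeed solves \eqref{MNsde_01} by the previous paragraph. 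Since $\|X_t^*\|_{\rm ms}^2=\alpha+1\le|\alpha|+2$ we have $X_t^*\in B_t$, so $X_t^*=\phi(t,s,X_s^*)\in\phi(t,s,B_s)$ for every $s\le t$ and therefore $X_t^*\in A_t$; and $\E X_t^*=\bar x\neq 0$ forces $X_t^*\neq 0$, whence $A_t\neq\{0\}$. I expect the main obstacle to be not this construction but the existence step, namely verifying the pullback (asymptotic) compactness required by Theorem~\ref{PullbackAttractor} in the infinite-dimensional space $\mathfrak X_t$, which the paper itself identifies as the central difficulty; the secondary technical points are the rigorous $L^2$-convergence of the integral defining $X^*$ and the justification that its constant moments coincide with $(\bar x,\bar y)$.
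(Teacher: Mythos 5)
There is a genuine gap: the existence of the pullback attractor is never established. Theorem~\ref{PBA1} supplies only the uniformly bounded, positively invariant absorbing family $\{B_t\}_{t\in\R}$; Theorem~\ref{PullbackAttractor} additionally requires the maps $\phi(t,s,\cdot)$ to be pullback (asymptotically) compact, and in the infinite-dimensional space $\mathfrak X_t=L^2(\Omega,\mathcal F_t;\R)$ closed bounded sets are not compact, so this hypothesis cannot be taken for granted. You correctly flag this as ``the main obstacle'' but then leave it unaddressed, and everything downstream --- the formula $A_t=\bigcap_{s\le t}\phi(t,s,B_s)$, the compactness of $A_t$, and the claim that a bounded entire solution lies in $A_t$ --- depends on it. The paper's proof spends essentially all of its effort precisely here: for $t_k\to-\infty$ and $X_k\in D_{t_k}$ it sets $Y^k_u:=\phi(u,t_k,X_k)$ on an interval $[s,t]$ with $s$ chosen so that $4e^{(2\alpha+1)(t-s)}(|\alpha|+2)^2<\eps^2/2$, uses the integral representation of $Y^k$ to show that the first-moment functions $u\mapsto\E Y^k_u$ are uniformly bounded and equicontinuous, applies Arzel\`a--Ascoli to extract a finite $\delta$-net among them, and then converts closeness of first moments into an $\eps$-net for $\{\phi(t,t_k,X_k)\}_{k\in\N}$ in $\mathfrak X_t$ via a Gronwall estimate on $\E(Y^k_u-Y^{n_k}_u)^2$, exploiting the sign of $2\alpha+1<0$ and the inequality $2(\E (Y^k_u)^2)^2+2(\E (Y^{n_k}_u)^2)^2-2\E Y^k_uY^{n_k}_u\big(\E (Y^k_u)^2+\E (Y^{n_k}_u)^2\big)\ge 0$. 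Without this step (or a substitute), the theorem is not proved.

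Your nontriviality argument, by contrast, is correct and takes a genuinely different route from the paper. The paper considers the positively invariant family $D_t=\{X\in\mathfrak X_t:\E X=\sqrt{(\alpha+1)/2},\ \E X^2=\alpha+1\}$ and uses pullback attraction together with compactness of $A_t$ to deduce abstractly that $A_t\cap D_t\neq\emptyset$. You instead exhibit the nontrivial entire solution explicitly: the reduction of the drift to $\bar x-X_t$ under the frozen moments is correct (since $\alpha-\bar y=-1$), the computation $\E[H_{t,u}H_{t,v}]=e^{-(t-u)}$ for $u\le v$ checks out and yields $\E X^*_t=\bar x$ and $\E(X^*_t)^2=2\bar x^2=\alpha+1$, so the moments close self-consistently and $X^*$ is a bounded nonzero entire solution with $X^*_t\in B_t$. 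This is more informative than the paper's argument, but it requires (i) the existence step above so that $A_t=\bigcap_{s\le t}\phi(t,s,B_s)$ is actually the attractor, and (ii) uniqueness of solutions of the mean-field SDE \eqref{MNsde_01} so that $X^*_t=\phi(t,s,X^*_s)$, which should be noted explicitly since the well-posedness theory of Section~\ref{sec_mssde} is only asserted to extend to equations involving second moments.
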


\begin{remark}
 The idea for  the proof is taken from \cite[Subsection 4.1]{Kloeden_12_2},  but their result cannot be applied directly, since the Lipschitz constant of the nonlinear terms is at least $1$, and $\alpha$ is not less than $-4$.  Instead  the uniform equicontinuity of the mapping $t\mapsto \E X_t$, and then the positivity of the second moment to obtain a better estimate are used.
\end{remark}

\begin{proof}
  In order to apply Theorem~\ref{PullbackAttractor}, it needs to be shown that $\phi$ is pullback asymptotically compact, i.e.,~given a uniformly bounded family $\mathcal D=\{D_t\}_{t\in\R}$  of nonempty subsets of $\mathfrak X_t$   and  sequences $\{t_k\}_{k\in\N}$ in $(-\infty, t)$ with $t_k\to-\infty$ as $k\to\infty$ and $\{X_k\}_{k\in\N} $ with $X_k\in D_{t_k}\subset \mathfrak X_{t_k}$ for each $k\in\N$, then the subset $\{\phi(t,t_k,X_k)\}_{k\in\N}\subset \mathfrak X_t$ is relatively compact. For this purpose, let $\eps > 0$ be arbitrary. A  finite cover of $\{\phi(t,t_k,X_k)\}_{k\in\N}$ with diameter less than $\eps$ will be constructed. Choose and fix $s\in\R$ with $s<t$ such that
  \begin{equation}\label{Eq17}
  4e^{(2\alpha+1)(t-s)}(|\alpha|+2)^2<\frac{\eps^2}{2}\,,
  \end{equation}
  and define $Y^k_s:=\phi(s,t_k,X_k)$. Using \eqref{Eq15} it can be assumed without loss of generality that
  \begin{equation}\label{Eq16}
  \E (Y^k_s)^2\leq |\alpha|+2\qquad\hbox{for } k\in\N.
  \end{equation}
  For $u\in [s,t]$, let $Y_u^k:=\phi(u,s)Y_s^k$ and consider a family of functions $f^k:[s,t]\rightarrow \R$ defined by  $f^k(u) := \E Y_u^k$. By Ito's formula,
  \[
  Y^k_t=e^{\alpha(t-s)}Y_s^k+\int_s^t e^{\alpha(t-u)}\left(\E Y^k_u-Y^k_u\E (Y^k_u)^2\right)\, \rmd u+\int_s^t e^{\alpha(t-u)}Y^k_u\,\rmd W_u,
  \]
  from which it can be shown that that $(f^k)_{k\in\N}$ is a uniformly equicontinuous sequence of functions. By the Arzel\`{a}--Ascoli theorem, for $\delta:=\eps\sqrt{\frac{\alpha}{2}+\frac{1}{4}}$, there exists a finite index set $J(\delta)\subset \N$ such that  for any $k\in \N$ there exists $n_k\in J(\delta)$ for which
  \begin{equation}\label{Ascoli}
  \sup_{u\in [s,t]} |\E Y_u^k-\E Y_u^{n_k}|<\delta\,.
  \end{equation}
  To conclude the proof of asymptotic compactness, it is sufficient to show the inequality $\|Y_t^k-Y_t^{n_k}\|_{\rm ms} \leq \eps$. Indeed, for $u\in [s,t]$, a direct computation gives
  \begin{align*}
  \frac{\rmd }{\rmd t}\E (Y_u^k-Y_u^{n_k})^2
  &=
  (1+2\alpha)\E (Y_u^k-Y_u^{n_k})^2+2(\E Y_u^k- \E Y_u^{n_k})^2-2(\E (Y_u^k)^2)^2\\
  &
  \hspace{10mm} -2(\E (Y_u^{n_k})^2)^2+ 2\E Y_u^kY_u^{n_k}(\E (Y_u^k)^2+\E (Y_u^{n_k})^2)\,.
  \end{align*}
  Note that
  \[
  2(\E (Y_u^k)^2)^2+2(\E (Y_u^{n_k})^2)^2- 2\E Y_u^kY_u^{n_k}(\E (Y_u^k)^2+\E (Y_u^{n_k})^2)\geq 0\,.
  \]
  Hence, by  the variation of constant formula,
  \begin{displaymath}
  \|Y_t^k-Y_t^{n_k}\|^2_{\rm ms}
  \leq
  e^{(2\alpha+1)(t-s)}\|Y_s^k-Y_s^{n_k}\|^2_{\rm ms}+2\int_s^t e^{(2\alpha+1)(t-u)}(\E Y_u^k- \E Y_u^{n_k})^2\,\rmd u\,,
  \end{displaymath}
  which together with \eqref{Eq16} and \eqref{Ascoli} implies that
  \begin{displaymath}
  \|Y_t^k-Y_t^{n_k}\|^2_{\rm ms}
  \leq
  4e^{(2\alpha+1)(t-s)}(|\alpha|+2)^2+\frac{2\delta^2}{2\alpha+1}
  <
  \frac{\eps^2}{2}+\frac{\eps^2}{2}=\eps^2\,.
  \end{displaymath}
  Here  \eqref{Eq17} was used  to obtain the preceding inequality.  Thus the set $\{\phi(t,t_k,X_k)\}_{k\in\N}$ is covered by the finite union of open balls with radius $\eps$ centered at $\phi(t,t_k,X_k)$, where $k\in J(\lambda)$, i.e.,~is totally bounded.

  Hence the MS-RDS is asymptotically compact and by  Theorem~\ref{PullbackAttractor} has a pullback attractor with component subsets $\{A_t\}_{t\in\R}$ that contain the zero solution.  It remains to show that the sets $A_t$ also contain other points. Consider a uniformly bounded family of bounded sets defined by
  $$
  D_t := \left\{X \in \mathfrak{X}_t  :   \E X   = \sqrt{(\alpha +1)/2}\,  , \;\E X^2 =   \alpha +1 \right\}.
  $$
  Recall that these values are a steady state solution of the moment equations \eqref{momeq1}--\eqref{momeq2}. Hence $\phi(t,s,X_s) \in D_t $ for all $t > s$ when $X_s \in D_s $. Then by pullback attraction
  \[
  \hbox{dist}(\phi(t,s,X_{s}), A_t) \leq  \hbox{dist}(\phi(t,s,D_{s}), A_t)\to 0\quad\hbox{as}\quad s\to-\infty\,.
  \]
  The convergence here is mean-square convergence, and $\E \phi(t,s,X_{s})^2 \equiv  \alpha +1$ for all~$t > s$. Thus, there exists a random variable $X^*_t \in$
  $A_t \cap D_t$ for each $t \in \R$, i.e.,~the pullback attractor is nontrivial. Note that by arguments in \cite{Kloeden_11_3} (see also \cite{Kloeden_11_2}), it follows that there is in fact an  entire solution $\bar{X}_t \in A_t \cap D_t$ for all $t \in \R$.
\end{proof}
%
%
%
%
%

%


\begin{thebibliography}{10}

\bibitem{Arnold_98_1}
L.~Arnold.
\newblock {\em Random {D}ynamical {S}ystems}.
\newblock Springer, Berlin, Heidelberg, New York, 1998.

\bibitem{Ateiwi_02_1}
A.M. Ateiwi.
\newblock About bounded solutions of linear stochastic {I}to systems.
\newblock {\em Miskolc Math. Notes}, 3(1):3--12, 2002.

\bibitem{Callaway_Unpub_1}
M.~Callaway, T.S. Doan, J.S.W. Lamb, and M.~Rasmussen.
\newblock The dichotomy spectrum for random dynamical systems and pitchfork
  bifurcations with bounded noise.
\newblock submitted.

\bibitem{Cong_02_1}
N.D. Cong and S.~Siegmund.
\newblock Dichotomy spectrum of nonautonomous linear stochastic differential
  equations.
\newblock {\em Stochastics and Dynamics}, 2(2):175--201, 2002.

\bibitem{Khasminskii_12_1}
R.~Khasminskii.
\newblock {\em Stochastic {S}tability of {D}ifferential {E}quations}, volume~66
  of {\em Stochastic Modelling and Applied Probability}.
\newblock Springer, Heidelberg, second edition, 2012.

\bibitem{Kloeden_10_1}
P.E. Kloeden and T.~Lorenz.
\newblock Stochastic differential equations with nonlocal sample dependence.
\newblock {\em Stochastic Analysis and Applications}, 28(6):937--945, 2010.

\bibitem{Kloeden_12_2}
P.E. Kloeden and T.~Lorenz.
\newblock Mean-square random dynamical systems.
\newblock {\em Journal of Differential Equations}, 253(5):1422--1438, 2012.

\bibitem{Kloeden_92_1}
P.E. Kloeden and E.~Platen.
\newblock {\em {Numerical Solution of Stochastic Differential Equations}},
  volume~23 of {\em Applications of Mathematics}.
\newblock Springer, Berlin, 1992.

\bibitem{Kloeden_11_2}
P.E. Kloeden and M.~Rasmussen.
\newblock {\em Nonautonomous {D}ynamical {S}ystems}, volume 176 of {\em
  Mathematical Surveys and Monographs}.
\newblock American Mathematical Society, Providence, RI, 2011.

\bibitem{Kloeden_11_3}
P.E. Kloeden and P.~Mar{\'{\i}}n-Rubio.
\newblock Negatively invariant sets and entire trajectories of set-valued
  dynamical systems.
\newblock {\em Set-Valued and Variational Analysis}, 19(1):43--57, 2011.

\bibitem{Sacker_78_2}
R.J. Sacker and G.R. Sell.
\newblock A spectral theory for linear differential systems.
\newblock {\em Journal of Differential Equations}, 27:320--358, 1978.

\bibitem{Stoica_10_1}
D.~Stoica.
\newblock Uniform exponential dichotomy of stochastic cocycles.
\newblock {\em Stochastic Processes and their Applications},
  120(10):1920--1928, 2010.

\bibitem{Wang_Unpub_2}
G.~Wang and Y.~Cao.
\newblock Dynamical spectrum in random dynamical systems.
\newblock to appear in: Journal of Dynamics and Differential Equations.

\end{thebibliography}

%
%
\end{document}